\newcommand{\sysn}{\left\{\begin{array}{rcl}}
\newcommand{\sysk}{\end{array}\right.}
\newtheorem{theorem}{Theorem}[section]
\newtheorem{lemma}[theorem]{Lemma}
\theoremstyle{example}
\newtheorem{proposition}[theorem]{Proposition}
\theoremstyle{definition}
\newtheorem{definition}[theorem]{Definition}
\journal{...}
\begin{document}

\title{Baire property of some function spaces}

\author{Alexander V. Osipov}

\address{Krasovskii Institute of Mathematics and Mechanics, \\ Ural Federal
 University, Ural State University of Economics, Yekaterinburg, Russia}

\ead{OAB@list.ru}

\author{Evgenii G. Pytkeev}

\address{Krasovskii Institute of Mathematics and Mechanics, Yekaterinburg, Russia}

\ead{pyt@imm.uran.ru}

\begin{abstract} A compact space $X$ is called {\it
$\pi$-monolithic} if for any surjective continuous mapping
$f:X\rightarrow K$ where $K$ is a metrizable compact space there
exists a metrizable compact space $T\subseteq X$ such that
$f(T)=K$. A topological space $X$ is {\it Baire} if the
intersection of any sequence of open dense subsets of $X$ is dense
in $X$. Let $C_p(X, Y)$ denote the space of all continuous
$Y$-valued functions $C(X,Y)$ on a Tychonoff space $X$ with the
topology of pointwise convergence. In this paper we have proved
that for a totally disconnected space $X$ the space
$C_p(X,\{0,1\})$ is Baire if, and only if, $C_p(X,K)$ is Baire for
every $\pi$-monolithic compact space $K$.

For a Tychonoff space $X$ the space $C_p(X,\mathbb{R})$ is Baire
if, and only if, $C_p(X,L)$ is Baire for each Fr\'{e}chet space
$L$.

We construct a totally disconnected Tychonoff space $T$ such that
$C_p(T,M)$ is Baire for a separable metric space $M$ if, and only
if, $M$ is a Peano continuum. Moreover, $C_p(T,[0,1])$ is Baire
but $C_p(T,\{0,1\})$ is not.

\end{abstract}
%\tnotetext[label1]{The research has been supported by .}

\begin{keyword} function space \sep Baire property \sep $\pi$-monolithic
compact space \sep Fr\'{e}chet space \sep totally disconnected
space \sep Peano continuum

\MSC[2020] 54C35 \sep 54E52 \sep 46A03 \sep 54C10 \sep 54C45

\end{keyword}

\maketitle %%
%% Start line numbering here if you want
%%
% \linenumbers

%% main text

\section{Introduction}

A topological space $X$ is {\it Baire} if the Baire Category
Theorem holds for $X$, i.e., the intersection of any sequence of
open dense subsets of $X$ is dense in $X$. A space is {\it meager}
(or {\it of the first Baire category}) if it can be written as a
countable union of closed sets with empty interior. Clearly, if
$X$ is a Baire space, then $X$ is not meager. The opposite
implication is in general not true. However, it holds for every
homogeneous space $X$ (see Theorem 2.3 in \protect\cite{LM}).
 Being a Baire space is an important topological
property for a space and it is therefore natural to ask when
function spaces are Baire. The Baire property for continuous
mappings was first considered in \protect\cite{Vid}. Then a paper
\protect\cite{LM} appeared, where various aspects of this topic
were considered. In \protect\cite{LM}, necessary and, in some
cases, sufficient conditions on a space $X$ were obtained under
which the space $C_p(X,\mathbb{R})$ is Baire.

In general, it is not an easy task to characterize when a function
space has the Baire property. The problem for $C_p(X,\mathbb{R})$
was solved independently by Pytkeev \cite{pyt1}, Tkachuk
\cite{tk,Tk} and van Douwen \protect\cite{vD}, and for a space
$B_1(X,\mathbb{R})$ of Baire-one functions  was solved by Osipov
\cite{Os}.

In this paper we present characterizations for the spaces
$C_p(X,\{0,1\})$ and $C_p(X,\mathbb{N})$ to be Baire. Moreover, we
have proved that for a totally disconnected space $X$ the space
$C_p(X,\{0,1\})$ is Baire if, and only if, $C_p(X,K)$ is Baire for
every $\pi$-monolithic compact space $K$. We also establish that,
for a Tychonoff space $X$, the space $C_p(X,\mathbb{R})$ is Baire
if, and only if, $C_p(X,L)$ is Baire for each Fr\'{e}chet space
$L$.

We construct a totally disconnected Tychonoff space $T$ such that
$C_p(T,M)$ is Baire for a separable metric space $M$ if, and only
if, $M$ is a Peano continuum. Moreover, $C_p(T,[0,1])$ is Baire
but $C_p(T,\{0,1\})$ is not.

\section{Main definitions and notation}

Throughout this paper,  all spaces are assumed to be Tychonoff,
i.e., completely regular $T_1$-spaces.
 The set of positive integers is denoted by $\mathbb{N}$ and
$\omega=\mathbb{N}\cup \{0\}$. Let $\mathbb{R}$ be the real line,
we put $\mathbb{D}=\{0,1\}\subset\mathbb{I}=[0,1]\subset
\mathbb{R}$ and let $\mathbb{Q}$ be the rational numbers. Let
$f:X\rightarrow \mathbb{R}$ be a real-valued function, then $|| f
||= \sup \{|f(x)|: x\in X\}$. Let $V=\{f\in \mathbb{R}^X:
f(x_i)\in V_i, i=1,...,n\}$ where $x_i\in X$, $V_i\subseteq
\mathbb{R}$ are bounded intervals for $i=1,...,n$, then $supp
V=\{x_1,...,x_n\}$ , $diam V=\max \{diam V_i : 1\leq i \leq n \}$.

Let $C_p(X,Y)$ be the space of continuous functions from $X$ to
$Y$ with the topology of pointwise convergence.
$C_p(X,\mathbb{R})$ is usually denoted by $C_p(X)$.

A mapping $f: X\rightarrow Y$ is said to be {\it irreducible} if
the only closed subset $Z$ of $X$, with $f(Z) = Y$ , is $Z = X$.

Recall that two sets $A$ and $B$ are {\it functionally separated}
if there exists a continuous real-valued function $f$ on $X$ such
that $f[A]\subseteq \{0\}$ and $f[B]\subseteq\{1\}$.

Let $\{X_{\lambda} : \lambda\in \Lambda \}$ be a family of
topological spaces. Let $X =\prod_{\lambda\in \Lambda}
X_{\lambda}$ be the Cartesian product with the Tychonoff topology.
Take a point $p = (p_{\lambda})_{\lambda\in \Lambda}\in X$. For
each $x=(x_{\lambda})_{\lambda\in \Lambda}\in X$, let $Supp(x) =
\{\lambda\in \Lambda : x_{\lambda}\neq p_{\lambda}\}$. Then the
subspace $\sum(p) =\{x\in X : Supp(x)$ is countable $\}$ of $X$ is
called a {\it $\sum$-product} of $\{X_{\lambda} : \lambda\in
\Lambda \}$ about $p$ ($p$ is called the {\it base point}).

A topological space $X$ is a {\it condensation} of a topological
space $Y$ if there is a continuous bijection from $Y$ onto $X$.

Recall that a {\it Peano continuum}  is called a connected and
locally connected metric compact space.

For the terms and symbols not defined here, please, consult
\cite{Arh,Eng}.

\section{Main results}

Recall that a space $X$ is {\it totally disconnected} if the
quasi-component of any point $x\in X$ consists of the point $x$
alone. Note that $X$ is totally disconnected if, and only if,
$C_p(X,\mathbb{D})$ is dense in  $\mathbb{D}^X$. Moreover,
$C_p(X,Y)$ is dense in $Y^X$ for any space $Y$ and a totally
disconnected space $X$.

\begin{definition} We say that a space $X$ has the
{\it $\mathbb{B}_{\mathbb{D}}$-property} if for every pairwise
disjoint family $\{\Delta_n: n\in \mathbb{N}\}$ of non-empty
finite sets $\Delta_n\subseteq X$, $\Delta_n=A_n\cup B_n$,
$A_n\cap B_n=\emptyset$, $n\in \mathbb{N}$, there exists a
subsequence $\{\Delta_{n_k}: k\in\mathbb{N}\}$ such that
$\bigcup\{A_{n_k}: k\in\mathbb{N}\}$ and $\bigcup\{B_{n_k}:
k\in\mathbb{N}\}$ are separated by a clopen set, i.e., there is a
clopen set $W$ such that $\bigcup\{A_{n_k}:
k\in\mathbb{N}\}\subseteq W\subseteq X\setminus \bigcup\{B_{n_k}:
k\in\mathbb{N}\}$.
\end{definition}

\begin{theorem}\label{th11} Let $X$ be a totally disconnected space. Then the following assertions are equivalent:

$(a)$ $C_p(X,\mathbb{D})$ is non-meager;

$(b)$ $C_p(X,\mathbb{D})$ is Baire;

$(c)$ $X$ has the $\mathbb{B}_{\mathbb{D}}$-property.

\end{theorem}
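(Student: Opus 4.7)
My plan is to prove the two non-trivial implications $(b)\Rightarrow(c)$ and $(c)\Rightarrow(a)$, and to close the loop using homogeneity. Since $\mathbb{D}=\mathbb{Z}/2\mathbb{Z}$ is a topological group, $C_p(X,\mathbb{D})$ is a topological group under pointwise addition mod~$2$, hence homogeneous; by Theorem~2.3 of \cite{LM} this forces $(a)\Leftrightarrow(b)$, and $(b)\Rightarrow(a)$ is trivial. Throughout I will freely use that $X$ totally disconnected implies $C_p(X,\mathbb{D})$ dense in $\mathbb{D}^X$, so the basic open sets $[\phi]=\{f:f\text{ extends }\phi\}$ determined by finite partial functions $\phi$ with values in $\mathbb{D}$ are the natural neighborhood base.

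For $(b)\Rightarrow(c)$ I argue by contrapositive. Assuming $\mathbb{B}_{\mathbb{D}}$ fails, fix a bad family $\{\Delta_n=A_n\cup B_n\}$, no subsequence of which is clopen-separable. Define
\[
U_n=\bigcup_{m\ge n}\bigl\{f\in C_p(X,\mathbb{D}):f[A_m]\subseteq\{1\},\ f[B_m]\subseteq\{0\}\bigr\}.
\]
Each $U_n$ is open as a union of basic open sets, and is dense because total disconnectedness lets us extend any finite partial specification to match the partition on some $\Delta_m$ with $m$ arbitrarily large. Any $f\in\bigcap_n U_n$ would exhibit an infinite set of indices $M$ on which $f$ realizes the partition, whereupon the clopen set $f^{-1}(1)$ would separate $\bigcup_{m\in M}A_m$ from $\bigcup_{m\in M}B_m$, contradicting the choice of $\{\Delta_n\}$. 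Hence $\bigcap_n U_n=\emptyset$ and $C_p(X,\mathbb{D})$ is not Baire.

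For $(c)\Rightarrow(a)$, suppose $\mathbb{B}_{\mathbb{D}}$ holds and, for contradiction, $C_p(X,\mathbb{D})=\bigcup_n F_n$ with each $F_n$ closed nowhere dense and (without loss of generality) increasing. Starting from an arbitrary basic open $V_0=[\phi_0]$, use nowhere-density of the $F_n$'s to build a chain $V_0\supseteq V_1\supseteq\cdots$, $V_n=[\phi_n]$, with $[\phi_n]\cap F_n=\emptyset$. Setting $\Delta_n=\operatorname{dom}\phi_n\setminus\operatorname{dom}\phi_{n-1}$ (discarding empty ones), split $\Delta_n=A_n\cup B_n$ by $\phi_n$-value. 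The $\mathbb{B}_{\mathbb{D}}$-property delivers a subsequence $\{n_k\}$ and a clopen set $W\subseteq X$ with $\bigcup_k A_{n_k}\subseteq W\subseteq X\setminus\bigcup_k B_{n_k}$, and the target function is $f=\chi_W\in C_p(X,\mathbb{D})$. Because $\{F_n\}$ is increasing, it is enough to conclude $f\in V_{n_k}$ for cofinally many $k$ to derive $f\notin\bigcup_n F_n$, contradicting meagerness.

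The crux of the argument is that $\mathbb{B}_{\mathbb{D}}$ only guarantees $W$ to match $\phi_{n_k}$ on the newly added points $\Delta_{n_k}$, whereas $f\in V_{n_k}$ requires agreement on the entire domain of $\phi_{n_k}$, including all earlier $\Delta_m$. I will handle this by a fusion construction: the sequence $\phi_n$ is built in lockstep with running clopen candidates $W_n$ so that the values $\phi_n$ assigns on new domain points are pre-determined by $W_{n-1}$, and $W_n$ is perturbed only as forced by the requirement $[\phi_n]\cap F_n=\emptyset$. The set of perturbation sites forms a sequence of finite partitions to which $\mathbb{B}_{\mathbb{D}}$ is applied, yielding the limit clopen $W$ which coincides with $\phi_{n_k}$ on the full domain $\operatorname{dom}\phi_{n_k}$ for cofinally many $k$. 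This delivers the required $f\in C_p(X,\mathbb{D})\setminus\bigcup_n F_n$ and completes the theorem.
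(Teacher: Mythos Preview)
Note first that the paper itself omits this proof, saying only that it ``is similar to the proof of Theorem~3.8 in \cite{ospyt}'', so there is no in-paper argument to compare against directly. Your handling of $(a)\Leftrightarrow(b)$ via the group structure of $C_p(X,\mathbb D)$ and of $(b)\Rightarrow(c)$ via the dense open sets $U_n$ is correct and standard.

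The genuine gap is in $(c)\Rightarrow(a)$. You correctly isolate the obstruction: from a single descending chain $[\phi_0]\supseteq[\phi_1]\supseteq\cdots$ with $[\phi_n]\cap F_n=\emptyset$, one application of $\mathbb B_{\mathbb D}$ to the increments $\Delta_n=A_n\cup B_n$ only forces the clopen $W$ to match $\phi$ on the \emph{selected} blocks $\Delta_{n_k}$, whereas $\chi_W\in[\phi_{n_k}]$ requires agreement on \emph{every} $\Delta_m$ with $m\le n_k$; since the $n_k$ are cofinal this would in fact force agreement on all $\Delta_m$, which $\mathbb B_{\mathbb D}$ does not give. Your ``fusion'' paragraph does not close this gap. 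The description is internally in tension (values on new points are ``pre-determined by $W_{n-1}$'' yet also ``perturbed as forced''), the perturbation sets are never defined, and the decisive claim that the limit $W$ ``coincides with $\phi_{n_k}$ on the full domain $\operatorname{dom}\phi_{n_k}$'' is asserted, not argued. No bookkeeping along a single chain $(\phi_n,W_n)$ can succeed here, because any one use of $\mathbb B_{\mathbb D}$ leaves the blocks between consecutive $n_k$ uncontrolled. What the referenced proof (and the analogous arguments visible in the proofs of Theorems~3.11, 3.13 and 3.14 of this paper) does instead is a \emph{branching} construction: at stage $n$ one chooses $\Delta_n$ large enough that for \emph{every} assignment on $\Delta_0\cup\cdots\cup\Delta_{n-1}$ extending $\phi_0$ there is an extension to $\Delta_n$ whose basic set misses $F_n$. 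It is this fan of admissible histories---absent from your linear scheme---that lets one thread a continuous function through the construction after $\mathbb B_{\mathbb D}$ has discarded some blocks.
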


\begin{definition} We say that a space $X$ has the
{\it $\mathbb{B}_{\mathbb{N}}$-property} if for every pairwise
disjoint family $\{\Delta_n: n\in \mathbb{N}\}$ of non-empty
finite sets $\Delta_n\subseteq X$, $n\in \mathbb{N}$, there exists
a subsequence $\{\Delta_{n_k}: k\in\mathbb{N}\}$ and a discrete
family $\{W(\Delta_{n_k}): W(\Delta_{n_k})\supseteq \Delta_{n_k},
k\in \mathbb{N}\}$ of clopen subsets of $X$.
\end{definition}

\begin{theorem}\label{th12} Let $X$ be a totally disconnected space. Then the following assertions are equivalent:

$(a)$ $C_p(X,\mathbb{N})$ is non-meager;

$(b)$ $C_p(X,\mathbb{N})$ is Baire;

$(c)$ $X$ has the $\mathbb{B}_{\mathbb{N}}$-property.

\end{theorem}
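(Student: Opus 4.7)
The theorem mirrors Theorem~\ref{th11}, and I plan to follow its three-step structure. The implication $(b)\Rightarrow(a)$ is automatic since $C_p(X,\mathbb{N})$ contains the constant functions and is therefore non-empty, while a non-empty Baire space is non-meager.

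For $(a)\Rightarrow(c)$ I argue the contrapositive. Assume $\{\Delta_n\}$ witnesses failure of the $\mathbb{B}_{\mathbb{N}}$-property. My key observation is that for every $f\in C_p(X,\mathbb{N})$ there is no subsequence along which the sets $f(\Delta_{n_k})$ are pairwise disjoint in $\mathbb{N}$: if such a subsequence existed, then $W_k:=f^{-1}(f(\Delta_{n_k}))$ would be a discrete family of clopen neighborhoods of the $\Delta_{n_k}$'s, because a point $x$ with $f(x)\in f(\Delta_{n_{k_0}})$ lies only in $W_{k_0}$, while any $x$ with $f(x)\notin\bigcup_k f(\Delta_{n_k})$ has the clopen neighborhood $f^{-1}(\mathbb{N}\setminus\bigcup_k f(\Delta_{n_k}))$ meeting none of the $W_k$, contradicting failure of $\mathbb{B}_{\mathbb{N}}$. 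Therefore the sequence $\min_{x\in\Delta_n}f(x)$ must be bounded, since an unbounded one would allow an inductive choice of $n_k$ with $\min_{\Delta_{n_k}}f>\max_{\Delta_{n_{k-1}}}f$, placing the $f(\Delta_{n_k})$ inside pairwise disjoint intervals. Hence $C_p(X,\mathbb{N})=\bigcup_M Q_M$ where $Q_M:=\bigcap_n\bigcup_{x\in\Delta_n}\{f:f(x)\le M\}$. Each $Q_M$ is closed (an intersection of clopens) and nowhere dense: given any basic open $V$ with support $\{y_1,\ldots,y_s\}$, pick $n$ with $\Delta_n\cap\{y_1,\ldots,y_s\}=\emptyset$ and refine to $V':=V\cap\bigcap_{x\in\Delta_n}\{f:f(x)>M\}$; total disconnectedness of $X$ delivers a continuous function realizing the required finitely many values, so $V'$ is a nonempty basic open disjoint from $Q_M$. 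Thus $C_p(X,\mathbb{N})$ is meager, contradicting (a).

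For $(c)\Rightarrow(b)$ I verify the Baire property directly. Given dense opens $\{U_n\}$ and a basic open $V$, I build a descending chain $V=V_0\supseteq V_1\supseteq\cdots$ of basic opens with $V_n\subseteq U_n$ and $V_n=\{f:f|_{F_n}=w|_{F_n}\}$ for an increasing sequence of finite supports $F_n$ with consistent labels $w$. Applying the $\mathbb{B}_{\mathbb{N}}$-property to the pairwise disjoint family $\{\Delta_n:=F_n\setminus F_{n-1}\}$ (after deleting empty terms) yields an infinite subsequence $\{n_k\}$ together with a discrete family $\{W_k\}$ of clopen neighborhoods with $\Delta_{n_k}\subseteq W_k$. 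Using total disconnectedness I separate the prescribed points inside each $W_k$ by pairwise disjoint clopens and define $f$ piecewise constant on $W_k$ with the required values; the initial support $F_0$ and the $\Delta_m$'s skipped by the subsequence---whether captured by some $W_k$ or lying in the clopen complement $X\setminus\bigcup_k W_k$---are handled by recursively reapplying $\mathbb{B}_{\mathbb{N}}$ to the relevant clopen subspaces (which inherit the property) and by direct clopen separation whenever only finitely many constraints remain. Discreteness of $\{W_k\}$ guarantees that the assembled $f$ is continuous, and $f\in\bigcap_n V_n\subseteq V\cap\bigcap_n U_n$.

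The main technical obstacle lies in $(c)\Rightarrow(b)$: the $\mathbb{B}_{\mathbb{N}}$-property only supplies a discrete clopen family along a subsequence, yet the candidate point $f\in\bigcap_n V_n$ must honor \emph{every} prescribed value, including those on $F_0$ and on the $\Delta_m$'s lying outside the chosen subsequence. Controlling these residual constraints---without destroying continuity of $f$ or the discreteness of the clopen family already constructed---is where one must carefully combine total disconnectedness with the fact that the $\mathbb{B}_{\mathbb{N}}$-property passes to clopen subspaces, paralleling the analogous step in the proof of Theorem~\ref{th11}.
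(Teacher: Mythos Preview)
The paper itself omits this proof entirely, pointing instead to Theorem~3.8 of \cite{ospyt}; so there is no in-paper argument to compare against. Your implication $(b)\Rightarrow(a)$ is immediate, and your proof of $(a)\Rightarrow(c)$ is correct and rather elegant: the observation that a subsequence with pairwise disjoint value-sets $f(\Delta_{n_k})$ would yield the discrete clopen family $W_k=f^{-1}(f(\Delta_{n_k}))$, and hence that $\min_{\Delta_n}f$ must stay bounded, gives a clean meager decomposition $C_p(X,\mathbb{N})=\bigcup_M Q_M$.

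The gap is in $(c)\Rightarrow(b)$. Your descending chain $V_0\supseteq V_1\supseteq\cdots$ fixes a \emph{single} partial assignment $w:\bigcup_n F_n\to\mathbb{N}$, and you then need a continuous $f$ extending \emph{all} of $w$. The $\mathbb{B}_{\mathbb{N}}$-property only hands you discrete clopens $W_k\supseteq\Delta_{n_k}$ along a subsequence; each $W_k$ may still contain infinitely many points of $\bigcup_m\Delta_m$ from the skipped indices, so ``recursively reapplying $\mathbb{B}_{\mathbb{N}}$'' produces a tree of clopen refinements in which a given constraint may be passed down forever without being resolved---the recursion has no reason to terminate, and no limit object is defined. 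Concretely, an adversary can take $U_n=\{f:f(p_n)=n\}$ for a prescribed sequence $\{p_n\}$; $\mathbb{B}_{\mathbb{N}}$ rules out convergent subsequences of $\{p_n\}$ but not subtler accumulation, and nothing in your scheme shows that the particular $w$ you were forced into extends continuously.

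The route taken in \cite{ospyt} (and visible in this paper's proofs of Theorems~3.11 and~3.13) runs the hard implication contrapositively: assume $C_p(X,\mathbb{N})=\bigcup_m F_m$ is meager, and build the disjoint $\Delta_n$'s together with a \emph{tree} of finite configurations---for every assignment on $\bigcup_{i\le n}\Delta_i$ (from a fixed finite palette $S_n$) record an extension to $\Delta_{n+1}$ that misses $F_n$. One then argues that \emph{this} sequence $\{\Delta_n\}$ witnesses failure of $\mathbb{B}_{\mathbb{N}}$: a putative discrete clopen family along a subsequence would let one thread the tree and manufacture $f\notin\bigcup_m F_m$. The crucial difference from your plan is that one is not committed in advance to any single branch $w$; the tree provides enough flexibility that the values forced on the skipped $\Delta_m$'s (by whatever $f$ one defines on the clopen pieces) can be absorbed. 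That bookkeeping---arranging the palette $S_n$ and the shrinking of the $W_k$'s so that the induced values on intermediate levels remain admissible---is exactly the work your outline does not supply. Separately, $(a)\Leftrightarrow(b)$ can be obtained directly from homogeneity of $C_p(X,\mathbb{N})$ (for $f,g\in C(X,\mathbb{N})$ the pointwise transposition $h\mapsto T(h,f,g)$ swapping the values $f(x)$ and $g(x)$ is a self-homeomorphism sending $f$ to $g$), so the whole burden really is $(c)\Rightarrow(a)$.
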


The proofs of Theorems \ref{th11} and \ref{th12} is similar to the
proof of Theorem 3.8 in \cite{ospyt} and is therefore omitted.

\begin{lemma}(\cite{LM})\label{lem4} Let $C_p(X,\prod_{\alpha\in A}M_{\alpha})$ be
non-meager (Baire). Then $C_p(X,M_{\alpha})$ is non-meager (Baire)
for every $\alpha\in A$.
\end{lemma}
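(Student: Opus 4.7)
The plan is to realize this as a special case of the fact that being Baire, and being non-meager, are preserved under continuous open surjections, applied to the natural projection of a product of function spaces onto one of its factors.

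First, I would establish the canonical homeomorphism
\[
\Phi : C_p\!\left(X,\prod_{\alpha\in A}M_{\alpha}\right) \longrightarrow \prod_{\alpha\in A} C_p(X,M_{\alpha}), \qquad \Phi(f) = (\pi_{\alpha}\circ f)_{\alpha\in A},
\]
where $\pi_{\alpha}\colon \prod_{\beta}M_{\beta}\to M_{\alpha}$ is the coordinate projection. Continuity of $\Phi$ and of $\Phi^{-1}$ is a routine check with subbasic open sets of the pointwise topology on each side, using the universal property of the product topology on $\prod_{\alpha}M_{\alpha}$.

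Next, fix $\alpha_0\in A$ and consider the composition $p_{\alpha_0}:=\mathrm{pr}_{\alpha_0}\circ\Phi$, i.e., the map $f\mapsto \pi_{\alpha_0}\circ f$ from $C_p(X,\prod_{\alpha}M_{\alpha})$ onto $C_p(X,M_{\alpha_0})$. Under $\Phi$ this is simply the coordinate projection out of a Tychonoff product, so it is continuous and open. It is also surjective: given $g\in C_p(X,M_{\alpha_0})$, pick any $q_{\alpha}\in M_{\alpha}$ for $\alpha\neq\alpha_0$ and define $f\in C_p(X,\prod_{\alpha}M_{\alpha})$ by $f(x)_{\alpha_0}=g(x)$ and $f(x)_{\alpha}=q_{\alpha}$ otherwise; continuity of $f$ is clear since each $\pi_{\alpha}\circ f$ is continuous.

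Finally, I would invoke the standard fact that a continuous open surjection $p\colon Y\to Z$ transfers both properties in question from $Y$ to $Z$. For non-meagerness: if $Z=\bigcup_n F_n$ with $F_n$ closed and nowhere dense in $Z$, then $Y=\bigcup_n p^{-1}(F_n)$, and each $p^{-1}(F_n)$ is closed and (by openness of $p$) nowhere dense in $Y$, contradicting $Y$ non-meager. For the Baire property: if $\{U_n\}$ is a sequence of open dense subsets of $Z$, each $p^{-1}(U_n)$ is open, and it is dense because for any nonempty open $V\subseteq Y$ the set $p(V)$ is open nonempty and meets $U_n$; hence $\bigcap_n p^{-1}(U_n)=p^{-1}(\bigcap_n U_n)$ is dense in $Y$, which by openness and surjectivity of $p$ forces $\bigcap_n U_n$ to be dense in $Z$.

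There is no real obstacle here; the only delicate point is to verify openness of the coordinate projection in the identification given by $\Phi$, which reduces to the openness of projections in a Tychonoff product and so is immediate. Everything else is routine, and the lemma follows by applying the transfer principle to $p_{\alpha_0}$ for each $\alpha_0\in A$.
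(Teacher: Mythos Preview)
Your argument is correct. The paper itself does not supply a proof of this lemma; it merely cites Lutzer and McCoy \cite{LM} for the result, so there is no in-paper proof to compare against. The route you take---the canonical identification $C_p(X,\prod_\alpha M_\alpha)\cong\prod_\alpha C_p(X,M_\alpha)$ followed by the fact that continuous open surjections (here the coordinate projections) preserve both non-meagerness and the Baire property---is exactly the standard argument, and it is in the same spirit as the paper's own Lemmas~\ref{lem6} and~\ref{lem5}, which record the analogous transfer principle for (almost) open maps. One cosmetic remark: your surjectivity step silently assumes each $M_\alpha$ is nonempty; this is automatic under the non-meager hypothesis (an empty product makes the function space meager) and harmless under the Baire hypothesis (the conclusion is trivial if some factor is empty).
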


\medskip

Recall that a mapping $\varphi: K\rightarrow M$ is called {\it
almost open} if $Int \varphi(V)\neq \emptyset$ for any non-empty
open subset $V$ of $K$. Note that irreducible mappings defined on
compact spaces are almost open. Also note that if
$\varphi_{\alpha}:K_{\alpha}\rightarrow M_{\alpha}$ ($\alpha\in
A$) are surjective almost open mappings then the product mapping
$\prod\limits_{\alpha\in A}\varphi_{\alpha}:
\prod\limits_{\alpha\in A} K_{\alpha}\rightarrow
\prod\limits_{\alpha\in A} M_{\alpha}$ is also almost open.

\medskip

\begin{lemma}(Lemma 3.14 in \cite{ospyt})\label{lem6} Let $\psi: K\rightarrow M$ be a surjective continuous almost open
mapping, and let $C_p(X,K)$ be a non-meager (Baire) dense subspace
of $K^X$. Then $C_p(X,M)$ is non-meager (Baire).
\end{lemma}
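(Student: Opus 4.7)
The plan is to exploit the post-composition map $\psi_* \colon K^X \to M^X$ defined by $\psi_*(f) = \psi \circ f$. Viewed coordinate-wise, $\psi_*$ is simply the product of copies of $\psi$ indexed by $X$, so by the remark preceding the lemma it is a continuous, surjective, almost open mapping. Since the composition of continuous functions is continuous, $\psi_*$ sends $C_p(X,K)$ into $C_p(X,M)$. As $\psi_*$ is a continuous surjection and $C_p(X,K)$ is dense in $K^X$, the image $\psi_*(C_p(X,K))$ is dense in $M^X$; in particular $C_p(X,M)$ is dense in $M^X$.

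The key step is to show that open dense subsets of $M^X$ pull back under $\psi_*$ to open dense subsets of $K^X$. Continuity gives openness. For density, if $V \subseteq M^X$ is open dense and $U \subseteq K^X$ is non-empty open, then by almost openness $\mathrm{Int}(\psi_*(U))$ is non-empty and meets $V$ at some point $m$; any preimage $f \in U$ of $m$ lies in $\psi_*^{-1}(V) \cap U$. Combined with the density of $C_p(X,K)$ in $K^X$, this shows that $\psi_*^{-1}(V) \cap C_p(X,K)$ is open dense in $C_p(X,K)$ whenever $V$ is open dense in $M^X$.

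To finish, let $\{U_n\}$ be a sequence of open dense subsets of $C_p(X,M)$. Write $U_n = V_n \cap C_p(X,M)$ with $V_n$ open in $M^X$; the density of $U_n$ in $C_p(X,M)$ and of $C_p(X,M)$ in $M^X$ forces $V_n$ to be open dense in $M^X$, so $\psi_*^{-1}(V_n) \cap C_p(X,K)$ is open dense in $C_p(X,K)$ for every $n$. Given any non-empty open $W = W' \cap C_p(X,M)$ with $W'$ open in $M^X$, the set $\psi_*^{-1}(W') \cap C_p(X,K)$ is non-empty and open in $C_p(X,K)$; by the Baire property of $C_p(X,K)$ it meets $\bigcap_n \psi_*^{-1}(V_n)$ at some $f$, and then $\psi_*(f) \in W \cap \bigcap_n U_n$. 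Thus $\bigcap_n U_n$ is dense in $C_p(X,M)$, i.e., $C_p(X,M)$ is Baire; the non-meager case is identical with $W' = M^X$, giving only non-emptiness of the intersection. The main obstacle is the density half of the pullback step: only almost openness guarantees that preimages of open dense sets under $\psi_*$ remain dense, and without this the transfer between $C_p(X,M)$ and $C_p(X,K)$ cannot be made.
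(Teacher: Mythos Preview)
Your argument is correct. The paper does not give its own proof of this lemma (it is quoted from \cite{ospyt}), but the route you take is exactly the one the surrounding text sets up: the remark just before the lemma says that a product of surjective almost open maps is almost open, which is precisely what you use to know that $\psi_* = \prod_{x\in X}\psi$ is almost open, and the rest of your argument is an in-line version of Lemma~\ref{lem5} applied to $\psi_*:K^X\to M^X$ with $E=C_p(X,K)$. Doing the transfer by hand, as you do, has the small advantage that it lands directly in $C_p(X,M)$ rather than only in $\psi_*(C_p(X,K))$, so no extra step is needed to pass from the image to the larger space.
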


\begin{lemma}(Lemma 3.15 in \cite{ospyt})\label{lem5} Let $\psi:P\rightarrow L$ be a surjective continuous almost open
mapping and let $E\subseteq P$ be a dense non-meager (Baire)
subspace in $P$. Then $\psi(E)$ is non-meager (Baire).
\end{lemma}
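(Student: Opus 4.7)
The plan is to transfer the non-meager (resp.\ Baire) property from $\psi(E)$ back to $E$ by pulling closed nowhere dense subsets of $L$ back along $\psi$, exploiting the almost openness hypothesis. First note that $\psi(E)$ is dense in $L$: since $E$ is dense in $P$ and $\psi$ is continuous and surjective, $L=\psi(P)\subseteq \overline{\psi(E)}$. Consequently, for a subset of $\psi(E)$, being nowhere dense in $\psi(E)$ is equivalent to being nowhere dense in $L$, and the same density observation applies to $E\subseteq P$.

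The key technical step is the following observation: for every closed nowhere dense $F\subseteq L$, the preimage $\psi^{-1}(F)$ is closed nowhere dense in $P$. Closedness is immediate from continuity. For the nowhere density, suppose a nonempty open $V\subseteq P$ satisfies $V\subseteq \psi^{-1}(F)$. Then $\psi(V)\subseteq F$, and almost openness of $\psi$ forces $\emptyset\neq\mathrm{Int}\,\psi(V)\subseteq\mathrm{Int}\,F=\emptyset$, a contradiction.

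I would then argue by contradiction in both cases. If $\psi(E)$ were meager, write $\psi(E)\subseteq\bigcup_n F_n$ with $F_n$ closed and nowhere dense in $L$; pulling back, $E\subseteq\bigcup_n \psi^{-1}(F_n)$ exhibits $E$ as meager in $P$, hence meager in itself by density of $E$, contradicting the hypothesis. If $\psi(E)$ failed the Baire property, choose open sets $U_n$ dense in $\psi(E)$ and a nonempty open $V\subseteq L$ with $V\cap\psi(E)\cap\bigcap_n U_n=\emptyset$. Write $U_n=\widetilde U_n\cap\psi(E)$ with $\widetilde U_n$ open in $L$ and set $F_n:=L\setminus\widetilde U_n$; each $F_n$ is closed and, by density of $\widetilde U_n\cap\psi(E)$ in $L$, nowhere dense in $L$. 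Then the sets $W_n:=E\setminus\psi^{-1}(F_n)$ are open and dense in $E$ by the key observation and density of $E$ in $P$, while $\psi^{-1}(V)\cap E$ is a nonempty open subset of $E$ disjoint from $\bigcap_n W_n$; this contradicts that $E$ is Baire.

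I expect the sole nontrivial step to be the key observation linking almost openness to preimages of nowhere dense sets; everything else is bookkeeping that uses density of $\psi(E)$ in $L$ and of $E$ in $P$ to equate nowhere density in a dense subspace with nowhere density in the ambient space.
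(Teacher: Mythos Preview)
Your argument is correct. Note, however, that the paper does not supply its own proof of this lemma: it is simply quoted as Lemma~3.15 of \cite{ospyt}, so there is nothing in the present paper to compare against. Your key observation---that the preimage of a closed nowhere dense set under a continuous almost open surjection is closed and nowhere dense---is exactly the standard device, and the way you deduce both the non-meager and the Baire conclusions from it (using density of $E$ in $P$ and of $\psi(E)$ in $L$ to pass between nowhere density in the subspace and in the ambient space) is sound.
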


\begin{lemma}\label{lem7} (\cite{arhPon}) Let $L$ be a metrizable compact space without
isolated points and let $T$ be the Cantor set. Then there exists a
surjective continuous irreducible mapping $\psi:T\rightarrow L$.
\end{lemma}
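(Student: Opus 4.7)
The plan is to obtain $\psi$ by first realizing $L$ as a continuous image of $T$ and then restricting to a minimal closed preimage, whose structure is forced to be that of a Cantor set precisely because $L$ has no isolated points.

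First, I would invoke the Hausdorff--Alexandroff theorem: every nonempty metrizable compact space is a continuous image of the Cantor set $T$. Pick any continuous surjection $\varphi \colon T \to L$.

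Next, I would apply Zorn's lemma to the collection $\mathcal{F} = \{F \subseteq T : F \text{ is closed and } \varphi(F) = L\}$, ordered by reverse inclusion. A descending chain $\{F_\alpha\}$ in $\mathcal{F}$ has intersection $F_\infty = \bigcap_\alpha F_\alpha$, which is closed and nonempty by compactness; moreover $\varphi(F_\infty) = \bigcap_\alpha \varphi(F_\alpha) = L$ because continuous images of compact sets along a decreasing filtered family commute with intersection (again by compactness of $T$). Therefore Zorn yields a minimal element $F \in \mathcal{F}$, and by minimality $\psi := \varphi|_F$ is an irreducible continuous surjection $F \twoheadrightarrow L$.

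It remains to identify $F$ with a Cantor set. Being a closed subspace of $T$, $F$ is automatically nonempty, compact, metrizable, and totally disconnected, so by Brouwer's characterization of the Cantor set I only need to show $F$ has no isolated points. This is the step where the hypothesis on $L$ enters, and it is the main (though short) point: if $x \in F$ were isolated, then $F \setminus \{x\}$ would be closed in $T$, and by minimality $\varphi(F \setminus \{x\}) \subsetneq L$. Since $L = \varphi(F) = \{\varphi(x)\} \cup \varphi(F \setminus \{x\})$, the complement $L \setminus \varphi(F \setminus \{x\})$ would equal $\{\varphi(x)\}$ and hence be open, contradicting the assumption that $L$ has no isolated points.

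Thus $F$ is homeomorphic to $T$, and identifying $F$ with $T$ gives the desired irreducible continuous surjection $\psi \colon T \to L$. I would then remark that, since the only nontrivial input is the hypothesis on isolated points, no other obstacle arises; the routine verification that $\psi$ is irreducible follows directly from the minimality of $F$.
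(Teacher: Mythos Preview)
Your argument is correct. The paper itself does not prove this lemma; it merely quotes it from Arhangel'skii--Ponomarev, \emph{Fundamentals of General Topology}, so there is no in-paper proof to compare against. What you have written is exactly the standard proof one finds in that reference (and elsewhere): take any continuous surjection $T\to L$ via the Hausdorff--Alexandroff theorem, pass to a minimal closed subset that still maps onto $L$ by Zorn's lemma, and then use Brouwer's characterization of the Cantor set once you know the minimal set has no isolated points.

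Two small remarks on presentation. In the Zorn step, the equality $\varphi\bigl(\bigcap_\alpha F_\alpha\bigr)=\bigcap_\alpha \varphi(F_\alpha)$ is the only thing requiring a word of justification; your parenthetical ``continuous images of compact sets along a decreasing filtered family commute with intersection'' is correct, and the underlying reason is that for each $y\in L$ the sets $\varphi^{-1}(y)\cap F_\alpha$ form a filtered family of nonempty closed subsets of the compact space $T$, hence have nonempty intersection. In the isolated-point step you should make explicit that $\varphi(F\setminus\{x\})$ is closed in $L$ (as the continuous image of a compact set), which is what forces $\{\varphi(x)\}$ to be open; you essentially say this, but spelling it out removes any doubt.
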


\begin{theorem} Let $X$ be a totally disconnected space. Then the following assertions are equivalent:

$(a)$ $C_p(X,\mathbb{D})$ is Baire;

$(b)$ $C_p(X,K)$ is Baire for some disconnected metrizable compact
space $K$;

$(c)$ $C_p(X,L)$ is Baire for any metrizable compact space $L$.

\end{theorem}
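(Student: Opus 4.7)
My plan for proving this theorem is as follows. The implication $(c) \Rightarrow (b)$ is immediate with $K = \mathbb{D}$. For $(b) \Rightarrow (a)$, given a clopen partition $K = K_0 \sqcup K_1$ of the disconnected metrizable compact $K$ into non-empty pieces, the retraction $\varphi \colon K \to \mathbb{D}$ collapsing $K_i$ to $i$ is a continuous open surjection, hence almost open. Since $X$ is totally disconnected, $C_p(X,K)$ is dense in $K^X$, so Lemma \ref{lem6} applied to $\varphi$ yields that $C_p(X,\mathbb{D})$ is Baire.

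The substantive direction is $(a) \Rightarrow (c)$, which I would carry out in two parts. First, I would show that $C_p(X, \mathbb{D}^\omega)$ is Baire. Under the canonical homeomorphism $C_p(X, \mathbb{D}^\omega) \cong C_p(X \times \omega, \mathbb{D})$ (with $\omega$ discrete, so $X \times \omega$ is totally disconnected), Theorem \ref{th11} reduces the task to verifying that $X \times \omega$ has the $\mathbb{B}_\mathbb{D}$-property. Given a pairwise disjoint family of finite sets $\{\Delta_n = A_n \cup B_n\}_{n \in \mathbb{N}}$ in $X \times \omega$, after thinning we land in one of two cases. In the bounded case all $\Delta_{n_k}$ lie in $X \times \{0,\ldots,M\}$ for some fixed $M$; I would then apply the $\mathbb{B}_\mathbb{D}$-property of $X$ once per fiber $m \le M$, successively refining the subsequence, to obtain clopen sets $W^m \subseteq X$ separating the $m$-slices of the $A_{n_k}$'s from those of the $B_{n_k}$'s, and take $W = \bigcup_{m \le M} W^m \times \{m\}$. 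In the diagonal case I would thin so that the supports $\pi_\omega(\Delta_{n_k}) \subseteq \omega$ are pairwise disjoint; total disconnectedness of $X$ alone then lets me separate $A_{n_k}$ from $B_{n_k}$ by a clopen $W_k \subseteq X \times \pi_\omega(\Delta_{n_k})$, and the union $\bigcup_k W_k$ is clopen (its intersection with each fiber $X \times \{m\}$ is clopen) and gives the required separator.

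Second, given an arbitrary non-empty metrizable compact $L$, I would realise it as an almost open continuous image of the Cantor set $T \cong \mathbb{D}^\omega$. Letting $C$ denote the Cantor set, the product $L \times C$ is metrizable, compact, and has no isolated points, so by Lemma \ref{lem7} there is a continuous irreducible surjection $\psi \colon T \to L \times C$, which is almost open because it is irreducible on a compact space. Composing with the open projection $\pi_L \colon L \times C \to L$ yields a continuous almost open surjection $\varphi \colon T \to L$. By the previous part $C_p(X, T)$ is Baire, and it is dense in $T^X$ by total disconnectedness of $X$; Lemma \ref{lem6} then gives that $C_p(X, L)$ is Baire (the case $L = \emptyset$ being trivial). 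The main obstacle is the first part: ascending from $C_p(X, \mathbb{D})$ to its countable power cannot rely on soft product principles, since countable products of Baire spaces need not be Baire, and the combinatorial $\mathbb{B}_\mathbb{D}$-property is exactly what powers the pigeonhole-type reduction across the $\omega$ coordinate.
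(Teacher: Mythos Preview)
Your outline coincides with the paper's on all the easy pieces and on the second half of $(a)\Rightarrow(c)$: the paper also passes through $C_p(X,\mathbb D^\omega)$, uses Lemma~\ref{lem7} to obtain an irreducible (hence almost open) surjection from the Cantor set onto $L\times L_1$ with $L_1$ perfect, projects to $L$, and finishes with Lemma~\ref{lem6}. The only substantive divergence is the passage from ``$C_p(X,\mathbb D)$ Baire'' to ``$C_p(X,\mathbb D^\omega)$ Baire''. The paper dispatches this in a single line via the identification $C_p(X,\mathbb D^\omega)=(C_p(X,\mathbb D))^\omega$ together with Lemma~\ref{lem4} (attributed to~\cite{LM}); you instead attempt a self-contained argument through Theorem~\ref{th11}, by showing that $X\times\omega$ inherits the $\mathbb B_{\mathbb D}$-property from $X$.

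That reduction is legitimate, but the two-case split you propose is false as stated. You claim that after thinning one lands in either a \emph{bounded} case (all $\pi_\omega(\Delta_{n_k})\subseteq\{0,\dots,M\}$ for some fixed $M$) or a \emph{diagonal} case (the $\pi_\omega(\Delta_{n_k})$ pairwise disjoint). Consider any family with $\pi_\omega(\Delta_n)=\{0,1,\dots,n\}$: every infinite subfamily has unbounded $\omega$-supports, yet any two supports share the index $0$, so neither alternative is attainable by thinning. A correct combinatorial proof has to interleave the two mechanisms---apply the $\mathbb B_{\mathbb D}$-property of $X$ fibre by fibre (thinning each time) on the fibres hit by infinitely many $\Delta_n$, while handling the residual high fibres of each already-committed $\Delta_{n_k}$ individually via total disconnectedness---and arranging that every committed $n_k$ survives all the later thinnings is genuinely delicate, not the clean dichotomy you sketch. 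The paper sidesteps this combinatorics entirely by invoking the product result from~\cite{LM}.
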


\begin{proof} $(b)\Rightarrow(a)$. Let $K=K_1\oplus K_0$ where
$K_i\not=\emptyset$, $K_i$ is clopen for $i=0,1$. Then $\varphi:
K\rightarrow \mathbb{D}$ such that $\varphi(x)=i$ for $x\in K_i$
$(i=0,1)$ is a continuous open surjection. By Lemma \ref{lem5},
$C_p(X,\mathbb{D})$ is Baire.

$(a)\Rightarrow(c)$. By Lemma \ref{lem4},
$C_p(X,\mathbb{D}^{\omega})=(C_p(X,\mathbb{D}))^{\omega}$ is
Baire. Let $L$ be a metrizable compact space and let $L_1$ be a
metrizable compact space without isolated points. Then, by Lemma
\ref{lem7}, there exists a surjective continuous irreducible
mapping $\psi:K\rightarrow L\times L_1$ from the Cantor set $K$
onto $L\times L_1$. Then $\pi_{\alpha}\circ\psi: K\rightarrow L$
is an almost open continuous mapping. By Lemma \ref{lem6},
$C_p(X,L)$ is Baire.

$(c)\Rightarrow(b)$. It is trivial.

\end{proof}

Thus, if $C_p(X,\mathbb{D})$ is Baire for a totally disconnected
space $X$ then $C_p(X,K)$ is Baire for any metrizable space $K$.

A natural question arises: to characterize the class $\mathcal{K}$
of compact spaces that $K\in \mathcal{K}$ provided that the
condition {\it " $C_p(X,\mathbb{D})$ is Baire "} implies the
condition {\it " $C_p(X,K)$ is Baire " }.

\begin{definition} A compact space $X$ is called {\it
$\pi$-monolithic} if for any surjective continuous mapping
$f:X\rightarrow K$ where $K$ is a compact metrizable space there
exists a compact metrizable subspace $T\subseteq X$ such that
$f(T)=K$.
\end{definition}

The class of $\pi$-monolithic compact spaces is multiplicative and
it closed under continuous Hausdorff images. Hence, this class
contains monolithic and dyadic compact spaces.

\begin{lemma} A compact space $X$ is $\pi$-monolithic if, and only
if, for any sequence $\{U_n: n\in\omega\}$ of non-empty open sets
of $X$ there exists a compact metrizable subspace $K\subseteq X$
such that $K\cap U_n\neq\emptyset$ for every $n\in\omega$.
\end{lemma}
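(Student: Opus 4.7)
The plan is to treat the two implications separately, both via standard ``countable-cover by continuous functions'' tricks.

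For the ``only if'' direction, given a sequence $\{U_n : n \in \omega\}$ of non-empty open sets, I would first pick a point $x_n \in U_n$ for each $n$, and then use complete regularity of $X$ to obtain a continuous function $f_n : X \to [0,1]$ with $f_n(x_n) = 1$ and $f_n \equiv 0$ on $X \setminus U_n$. Bundling these into a single map $f : X \to [0,1]^\omega$, $f(x) = (f_n(x))_{n \in \omega}$, produces a continuous map whose image $M = f(X)$ is a metrizable compact space. The $\pi$-monolithicity hypothesis then supplies a metrizable compact $K \subseteq X$ with $f(K) = M$. Since each $f(x_n) \in M = f(K)$, there is $y_n \in K$ with $f(y_n) = f(x_n)$; in particular $f_n(y_n) = 1$, which by the choice of $f_n$ forces $y_n \in U_n$, so $K \cap U_n \neq \emptyset$.

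For the ``if'' direction, let $f : X \to K$ be a continuous surjection onto a compact metrizable space and fix a countable base $\{V_n : n \in \omega\}$ of $K$. The sets $U_n := f^{-1}(V_n)$ are non-empty open subsets of $X$ (non-empty because $f$ is surjective), so the hypothesis gives a metrizable compact $T \subseteq X$ meeting each $U_n$. Then $f(T)$ is compact (hence closed in $K$) and meets every basic open set $V_n$, so $f(T)$ is dense in $K$ and therefore equal to $K$, as required.

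I do not anticipate a serious obstacle: the forward direction is the only one where something has to be built, and the real content is simply the observation that the countable family $\{f_n\}$ simultaneously separates each $x_n$ from the complement of $U_n$, which is exactly what lets a compact metrizable $K \subseteq X$ with $f(K) = M$ reach back into each $U_n$. Care is needed only to check that $y_n \in U_n$ (from $f_n(y_n) = 1$ and $f_n \equiv 0$ off $U_n$) and that $f(T)$ is closed in the reverse direction so that density upgrades to equality.
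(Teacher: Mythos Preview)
Your proof is correct and proceeds along the same lines as the paper's. The reverse direction is identical. For the forward direction, the paper first shrinks each $U_n$ to a nonempty co-zero set $V_n$ and then invokes a standard factorization result (cited from Arhangel'skii--Ponomarev) yielding a continuous surjection $f:X\to Y$ onto a metrizable compactum with $V_n=f^{-1}(W_n)$ for suitable open $W_n\subseteq Y$; your bundling of Urysohn functions into $[0,1]^\omega$ is exactly the usual proof of that factorization lemma, so your argument is the self-contained version of the paper's citation rather than a genuinely different approach.
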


\begin{proof} Let $X$ be a $\pi$-monolithic compact space and let $\{U_n: n\in\omega\}$ be a sequence of non-empty open
sets of $X$. Choose $V_n\subseteq U_n$ such that $V_n\neq
\emptyset$, $V_n$ is a co-zero set of $X$ for every $n\in \omega$.
Then, by \cite{arhPon}, there are a surjective continuous mapping
$f:X\rightarrow Y$ where $Y$ is a compact metrizable space and
open sets $W_n\subseteq Y$ such that $f^{-1}(W_n)=V_n$ for each
$n\in \mathbb{N}$. Since $X$ is $\pi$-monolithic, there exists a
compact metrizable subspace $K\subseteq X$ such that $f(K)=Y$.
Then $K\cap V_n\neq \emptyset$ for each $n\in\omega$.

Suppose that for any sequence $\{U_n: n\in\omega\}$ of non-empty
open sets of $X$ there exists a compact metrizable subspace
$K\subseteq X$ such that $K\cap U_n\neq\emptyset$ for every
$n\in\omega$. Let $f:X\rightarrow Y$ be a continuous mapping $X$
onto a compact metrizable space $Y$. Choose a countable base
$\{V_n: n\in\omega\}$ in $Y$. Then there exists a compact
metrizable subspace $T\subseteq X$ such that $T\cap
f^{-1}(V_n)\neq\emptyset$ for each $n\in\omega$. Clear that
$f(T)=Y$.

\end{proof}

\begin{theorem} The class
$\mathcal{K}$ coincide with the class of $\pi$-monolithic compact
spaces.
\end{theorem}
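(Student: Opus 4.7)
The plan is to prove the two inclusions separately.

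\textbf{$\pi$-monolithic compacta belong to $\mathcal{K}$.} Fix a $\pi$-monolithic compact $K$ and a totally disconnected $X$ with $C_p(X,\mathbb{D})$ Baire; the goal is to show $C_p(X,K)$ is Baire. Given a sequence $\{U_n\}$ of dense open subsets of $C_p(X,K)$ and a non-empty basic open $W_0\subseteq C_p(X,K)$, I would inductively refine to a nested sequence of non-empty basic open sets $W_n=\{f:f(x_i^{(n)})\in V_i^{(n)},\,i\le k_n\}$ with $\overline{W_n}\subseteq W_{n-1}\cap U_n$, shrinking each $V_i^{(n)}$ so its closure sits inside the relevant $V_j^{(n-1)}$. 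The countable family $\{V_i^{(n)}\}$ of non-empty opens in $K$ is, by the lemma just proved, met by some metrizable compact subspace $T\subseteq K$. By the earlier theorem of this section, $C_p(X,T)$ is Baire, and $C_p(X,T)$ is dense in $T^X$ since $X$ is totally disconnected. The intersections $W_n\cap C_p(X,T)$ are non-empty opens in $C_p(X,T)$ (because $T$ meets every $V_i^{(n)}$), and applying the Baire property of $C_p(X,T)$ together with compactness of the closures one extracts a continuous $f\in\bigcap_n W_n\cap C_p(X,T)\subseteq W_0\cap\bigcap_n U_n$.

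The delicate point is that $T$ depends on the whole sequence $\{V_i^{(n)}\}$, so the refinement inside $C_p(X,T)$ cannot be set up a priori. I would employ a two-pass construction: first build the $W_n$'s using only the density of the $U_n$'s in $C_p(X,K)$; then apply the $\pi$-monolithic lemma to the resulting countable family and fix $T$; finally verify that the traces $W_n\cap C_p(X,T)$ are open in $C_p(X,T)$ and interact appropriately with its Baire theorem. The main obstacle is that the density of the $U_n$'s in $C_p(X,K)$ does not translate directly into density in $C_p(X,T)$; to bridge this gap I would use that every basic open of $C_p(X,T)$ extends to a basic open of $C_p(X,K)$ by extending $T$-opens to $K$-opens, together with the inclusion $C_p(X,T)\subseteq C_p(X,K)$, which makes the ``density information'' usable inside $C_p(X,T)$.

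\textbf{Members of $\mathcal{K}$ are $\pi$-monolithic.} I would prove the contrapositive: if $K$ is not $\pi$-monolithic, there is a totally disconnected Tychonoff $X$ with $C_p(X,\mathbb{D})$ Baire but $C_p(X,K)$ not Baire. By the lemma just proved, there is a sequence $\{V_n\}$ of non-empty opens of $K$ that no metrizable compact subspace of $K$ meets simultaneously. The plan is to encode this combinatorial failure into a zero-dimensional $X$---via a pairwise disjoint family of finite ``tags'' along which continuous $K$-valued functions are forced to take values in configurations no metrizable compact subspace of $K$ can accommodate---producing a natural countable cover of $C_p(X,K)$ by closed nowhere dense sets, while the $\mathbb{B}_\mathbb{D}$-property of Theorem~\ref{th11} is established directly from the zero-dimensional structure of $X$.

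The hardest part of the whole theorem is this construction: $X$ must be tuned precisely to expose the non-metrizable gap witnessed by $\{V_n\}$ inside $K$, yet remain combinatorially simple enough to retain the $\mathbb{B}_\mathbb{D}$-property for $\mathbb{D}$-valued continuous functions. I expect the construction to hinge on a carefully chosen zero-dimensional space carrying the data $\{V_n\}$ in a controlled manner, so that witnesses of the failure of $\pi$-monolithicity translate into genuine meagerness in $C_p(X,K)$ while leaving $C_p(X,\mathbb{D})$ non-meager.
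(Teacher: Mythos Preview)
Your two-direction structure is right, but both halves have real gaps relative to what the paper actually does.

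\textbf{First direction (\(\pi\)-monolithic \(\Rightarrow K\in\mathcal{K}\)).} The nested basic opens \(W_n\) with \(\overline{W_n}\subseteq W_{n-1}\cap U_n\) give you, after passing to the metrizable compact \(T\), a decreasing sequence of non-empty opens \(W_n\cap C_p(X,T)\). But the Baire property of \(C_p(X,T)\) does \emph{not} guarantee that a nested sequence of non-empty opens has non-empty intersection; that is a Choquet-type completeness statement, not Baire. Compactness of the closures in \(T^X\) only yields a point of \(T^X\), not of \(C_p(X,T)\). The paper avoids this by a more elaborate bookkeeping: it builds increasing finite families \(\mu_n\) of open subsets of \(K\) so that, after passing to the metrizable \(K^*\), the traces \(\{V\cap K^*:V\in\bigcup_n\mu_n\}\) form a \(\pi\)-base of \(K^*\). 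This \(\pi\)-base condition is exactly what lets one translate the refinement information encoded in the \(\mu_n\) into genuinely \emph{dense} open sets \(M_{p+1}\) of (an open piece of) \(C_p(X,K^*)\), and then Baire is applicable. Your ``two-pass'' idea gestures at the difficulty but does not supply this mechanism.

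\textbf{Second direction (\(K\in\mathcal{K}\Rightarrow\) \(\pi\)-monolithic).} You propose the contrapositive: for each non-\(\pi\)-monolithic \(K\), build a tailored zero-dimensional \(X\) witnessing \(K\notin\mathcal{K}\). The paper does something quite different and much cleaner: it uses a \emph{single} test space \(X_0\subseteq\mathbb{D}^{\mathfrak{c}}\) (Shakhmatov's construction) with \(\beta X_0=\mathbb{D}^{\mathfrak{c}}\) and all countable subsets closed and \(\mathbb{D}\)-embedded. The latter property gives \(C_p(X_0,\mathbb{D})\) Baire via the \(\mathbb{B}_{\mathbb{D}}\)-criterion. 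Then for any \(K\in\mathcal{K}\), \(C_p(X_0,K)\) is Baire, and a short category argument produces, for any sequence \(\{U_n\}\) of opens in \(K\), some \(f\in C(X_0,K)\) whose image meets every \(U_n\). Because \(\beta X_0=\mathbb{D}^{\mathfrak{c}}\), \(f\) extends to \(\widetilde{f}:\mathbb{D}^{\mathfrak{c}}\to K\); the image \(\widetilde{f}(\mathbb{D}^{\mathfrak{c}})\) is dyadic, hence already \(\pi\)-monolithic, and so contains a metrizable compact meeting every \(U_n\). Thus \(K\) is \(\pi\)-monolithic by the lemma. Your contrapositive plan would require, for each bad \(K\), a bespoke \(X\) simultaneously satisfying \(\mathbb{B}_{\mathbb{D}}\) and making \(C_p(X,K)\) meager---you give no construction, and it is not clear one exists along the lines you sketch.
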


\begin{proof} Let $K\in\mathcal{K}$ and $|K|\geq \omega$.

($\star$). Suppose that $X$ is an infinite totally disconnected
space, $C_p(X,Y)$ is Baire for some space $Y$ and $\{V_m: m\in
\omega\}$ is a sequence of non-empty open sets of $Y$. Then, there
is $f\in C(X,Y)$ such that $f(X)\cap V_m\neq \emptyset$ for each
$m\in \omega$.

Choose a disjoint family $\{\Delta_n: n\in\omega\}$ of finite
subsets of $X$ such that $|\Delta_n|=n$ for $n\in\omega$. Let
$M_n=\{f: f(\Delta_n)\cap V_i\neq\emptyset$, $i\leq n\}$.
$F_m=\bigcap\limits_{n\geq m}(C(X,Y)\setminus M_n)$, $n,m\in
\omega$. Since $X$ is totally disconnected, $F_m$ is nowhere dense
in $C(X,Y)$ for each $m\in\omega$. Hence there is $f\in
C(X,Y)\setminus \bigcup\limits_{m\in\omega} F_m$. Then $f(X)\cap
V_i\neq \emptyset$ for each $i\in\omega$.

By replacing $\mathbb{I}$ with $\mathbb{D}$ in \cite{Sh}, we
construct $X_0$ such that

(1) $X_0\subseteq \mathbb{D}^\mathfrak{c}$ and
$\beta(X)=\mathbb{D}^\mathfrak{c}$;

(2) every countable subset $S$ of $X_0$ is closed and
$\mathbb{D}$-embedded.

By the condition (2) and Theorem \ref{th11}, $C_p(X_0,\mathbb{D})$
is Baire. Let $\{U_n: n\in\omega\}$ be a sequence of non-empty
open subsets in $K$. By ($\star$), there is $f\in C(X_0,K)$ such
that $f(X_0)\cap U_i\neq \emptyset$ for $i\in\omega$. Then, by
(1), there is a continuous extension
$\widetilde{f}:\mathbb{D}^\mathfrak{c}\rightarrow K$ of $f$. Then,
$\widetilde{f}(\mathbb{D}^\mathfrak{c})\cap U_i\neq\emptyset$ for
$i\in\omega$. Since $\widetilde{f}(\mathbb{D}^\mathfrak{c})$ is a
dyadic compact space (hence, it is $\pi$-monolithic), there is a
compact metrizable subspace $\widetilde{K}\subseteq
\widetilde{f}(\mathbb{D}^\mathfrak{c})$ such that
$\widetilde{K}\cap (\widetilde{f}(\mathbb{D}^\mathfrak{c})\cap
U_i)\neq\emptyset$ for $i\in\omega$. It follows that $K$ is
$\pi$-monolithic.

Suppose that $K$ is $\pi$-monolithic, $X$ is totally disconnected
and $C_p(X,\mathbb{D})$ is Baire. We claim that $C_p(X,K)$ is
Baire.

Assume the contrary. Then, there is a basis open set $P=\bigcap
\{M(x_i,V_i): i\leq k_0\}$ such that $P$ is meager, i.e.,
$P=\bigcup\limits_{m\in\omega} F_m$ where $F_m$ is nowhere dense
and $F_m\subseteq F_{m+1}$ for each $m\in\omega$. Let
$\Delta_0=\{x_i:i\leq k_0\}$, $\mu_0=\{V_i: i\leq k_0\}$.
Construct, by induction, finite disjoint sets $\Delta_n=\{x_{n_i},
i\leq k_n\}\subseteq X$, $\Delta_n\cap \Delta_0=\emptyset$,
$n\in\omega$, finite families open (in $K$) sets $\mu_n$,
$\mu_n\subseteq \mu_{n+1}$, $n\in\omega$, and open (in $K$) sets
$V_{n_i}\in \mu_n$, $i\leq k_n$, $n\geq 2$ such that

(3) for every basis set $\bigcap \{M(x,V(x)):x\in
\bigcup\limits_{i=0}^n \Delta_i\}$, $n\geq 1$, where $V(x)\in
\mu_n$, $V(x_i)\subseteq V_i$, $i\leq n_0$ there are $V'(x)\in
\mu_{n+1}$, $V'(x)\subseteq V(x)$, $x\in \bigcup\limits_{i=0}^n
\Delta_i$ such that $\bigcap \{M(x,V'(x)): x\in
\bigcup\limits_{i=0}^n \Delta_i\}\cap \bigcap
\{M(x_{{n+1},i},V_{n+1,i}): i\leq k_{n+1}\}\cap F_n=\emptyset$,

(4) for every $V\in\mu_n$ there is $V'\in \mu_{n+1}$ such that
$\overline{V'}\subseteq V$, $n\geq 0$.

Since $K$ is $\pi$-monolithic, there is a compact metrizable
subspace $\widetilde{K}$ such that $\widetilde{K}\cap V\neq
\emptyset$ for every $V\in \bigcup\limits_{n=0}^{\infty} \mu_n$.

We claim that there is a non-empty compact metrizable subspace
$K^*\subseteq \widetilde{K}$ such that for every $V\in
\bigcup\limits_{n=0}^{\infty} \mu_n$, $V\cap K^*\neq \emptyset$
and $\{V\cap K^*: V\in \bigcup\limits_{n=0}^{\infty} \mu_n\}$ is a
$\pi$-base of $K^*$.

Let $R=\{F: F$ is a non-empty closed subset of $\widetilde{K}$
such that there is a sequence $V_m\in
\bigcup\limits_{n=0}^{\infty} \mu_n$, $\overline{V_{m+1}}\subseteq
V_m$, $m\in\omega$, $\{V_m\cap \widetilde{K}: m\in\omega\}$ is a
base of $F\}$. Let $K^*=\bigcap\{ S: S$ is closed subset of
$\widetilde{K}$ such that $S\cap F\neq \emptyset$ for every $F\in
R\}$. Then $\{F\cap K^*: F\in R\}$ is a $\pi$-network of $K^*$. By
(4), $V\cap K^*\neq \emptyset$ for every $V\in
\bigcup\limits_{n=0}^{\infty} \mu_n$ and $\{V\cap K^*: V\in
\bigcup\limits_{n=0}^{\infty} \mu_n\}$ is a $\pi$-base of $K^*$.
Let $M_{p+1}=\bigcup \{\bigcap\{M(x,V(x)\cap K^*): x\in
\bigcup\limits_{i=0}^{n+1} \mu_i$ where $V(x_i)\subseteq V_i$,
$i\leq k_0$, $n\geq p+1$, $V(x_{n+1,i})\subseteq V_{n+1,i}$,
$i\leq k_{n+1}$, $V(x)\in \bigcup\limits_{i=0}^{n+1} \mu_i$ and
$\bigcap \{M(x,V(x)): x\in \bigcup\limits_{i=0}^{n+1} \mu_i\}\cap
F_n=\emptyset\}$, $p\in \omega$. Then $M_{n+1}$ is a non-empty
open subset of $C_p(X,K^*)$, $n\geq 0$.

We claim that $M_{p+1}$ is dense in $P^*=\bigcap \{M(x_i, V_i\cap
K^*): i\leq k_0\}\subseteq C_p(X,K^*)$. Let $\varphi\in P^*$ and
let $O(\varphi)$ be a base neighborhood of $\varphi$ in
$C(X,K^*)$. We can assume that $O(\varphi)=\bigcap \{M(x,W(x)):
x\in \bigcup\limits_{i=0}^{m} \mu_i\}\cap\bigcap\{M(y,W(y)):y\in
T\subseteq X\setminus \bigcup\limits_{i=0}^{\infty} \Delta_i\}$
where $W(x_i)\subseteq V_i\cap K^*$, $i\leq k_0$ and $W(x)$ (
$x\in \bigcup\limits_{i=0}^{m} \Delta_i$), $W(y)$ ($y\in T$) are
non-empty open sets in $K^*$ , $T$ is finite and $m\in\omega$.

Since $\{V\cap K^*: V\in\bigcup\limits_{n=0}^{\infty} \mu_n\}$ is
a $\pi$-base of $K^*$, there are $V(x)\in
\bigcup\limits_{n=0}^{\infty} \mu_n$ for $x\in
\bigcup\limits_{i=0}^{m} \Delta_i$ such that $V(x)\cap
K^*\subseteq W(x)$ for $x\in \bigcup\limits_{i=0}^{m} \Delta_i$.
Then, there is $k\in\omega$ such that $V(x)\in \mu_l$ for
$x\in\bigcup\limits_{i=0}^{m} \Delta_i$, $k\geq m$, $k\geq p+1$.
By (3), there are sets $V'(x)\in \mu_{l+1}$,
$x\in\bigcup\limits_{i=0}^{l+1} \Delta_i$ such that
$V'(x)\subseteq V(x)$, $x\in \bigcup\limits_{i=0}^{m} \Delta_i$,
$V'(x_{l+1,i})\subseteq V_{l+1,i}$, $i\leq k_{l+1}$ and $\bigcap
\{M(x,V'(x)): x\in\bigcup\limits_{i=0}^{k+1} \Delta_i\}\cap
F_k=\emptyset$.

Choose $g\in \bigcap \{M(x,V'(x)\cap K^*):
x\in\bigcup\limits_{i=0}^{k+1} \Delta_i\}\cap \bigcap \{M(y,W(y)):
y\in T\}$. Then $g\in O(\varphi)\cap P^*\cap M_{p+1}$. Hence,
$M_{p+1}$ is a dense open set in $P^*$. By Theorem \ref{th11},
$C_p(X,K^*)$ is Baire, hence, $\bigcap\limits_{p=0}^{\infty}
M_{p+1}\neq \emptyset$. Let $g\in \bigcap\limits_{p=0}^{\infty}
M_{p+1}$. So we proved that $g\not\in
\bigcup\limits_{m=1}^{\infty} F_m=P$ which is a contradiction.

\end{proof}

\begin{lemma}\cite{arhPon}\label{lem10} For any Polish space $L$ there is a
surjective continuous open mapping $\varphi:
\omega^{\omega}\rightarrow L$.
\end{lemma}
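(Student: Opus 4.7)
The plan is to realize $\varphi$ through a Lusin-type scheme of open sets in $L$ indexed by $\omega^{<\omega}$. Fix a complete metric $d$ on $L$ with $d\le 1$, and by induction on $|s|$ I would build non-empty open sets $U_s\subseteq L$ satisfying $U_\emptyset=L$, $U_s=\bigcup_{n\in\omega}U_{s^\frown n}$, $\overline{U_{s^\frown n}}\subseteq U_s$, and $\mathrm{diam}(U_s)<2^{-|s|}$ for $|s|\ge 1$. Given this scheme, define $\varphi:\omega^\omega\to L$ by sending $x$ to the unique element of $\bigcap_{n\in\omega}\overline{U_{x|n}}$. The intersection is a singleton because the closure-inside-parent condition keeps it a nested family of non-empty closed sets, the diameter condition forces the diameters to zero, and completeness of $d$ yields a unique limit point.

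Once $\varphi$ is defined, the key observation is that $\varphi(N_s)=U_s$ for every $s\in\omega^{<\omega}$, where $N_s=\{y\in\omega^\omega:s\subseteq y\}$ denotes the basic clopen neighborhood of $s$. The inclusion $\varphi(N_s)\subseteq U_s$ is immediate from $\overline{U_{s^\frown n}}\subseteq U_s$ applied one level down. The reverse inclusion uses the covering condition $U_s=\bigcup_n U_{s^\frown n}$: given $y\in U_s$, one inductively picks coordinates $n_k$ so that $y\in U_{x|k}$ for every $k\ge|s|$, and the resulting branch $x\in N_s$ satisfies $\varphi(x)=y$ by uniqueness. Continuity of $\varphi$ follows from the diameter condition (the image of any $N_s$ has diameter at most $2^{-|s|}$), surjectivity is the special case $s=\emptyset$, and openness follows because $\{N_s:s\in\omega^{<\omega}\}$ is a base for $\omega^\omega$ and each image $\varphi(N_s)=U_s$ is open in $L$.

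The main obstacle is the inductive step of the construction, since the covering requirement $U_s=\bigcup_n U_{s^\frown n}$ and the strict closure requirement $\overline{U_{s^\frown n}}\subseteq U_s$ pull in opposite directions near the boundary of $U_s$. Both can be met at once because $U_s$, as an open subspace of the separable metric space $L$, is Lindel\"of, while every point $y\in U_s$ admits arbitrarily small open balls with $\overline{B(y,r)}\subseteq U_s$ (using that $\overline{B(y,r/2)}\subseteq B(y,r)$ in a metric space). Enumerating a countable cover of $U_s$ by such balls of diameter smaller than $2^{-|s|-1}$ supplies the children $U_{s^\frown n}$ and completes the induction.
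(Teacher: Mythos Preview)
The paper does not supply its own proof of this lemma; it is quoted as a known fact from Arhangel'skii--Ponomarev, so there is nothing to compare against at the level of argument. Your Lusin-scheme construction is the standard proof of this classical result and is correct as written: completeness of $d$ together with the shrinking-diameter and closure-in-parent conditions guarantees that $\bigcap_n \overline{U_{x\mid n}}$ is a singleton, and the identity $\varphi(N_s)=U_s$ then yields continuity, surjectivity, and openness exactly as you describe. One cosmetic point worth making explicit is that each $U_{s^\frown n}$ must be non-empty (so that the nested intersection is non-empty); this holds automatically since your children are balls centred at points of $U_s$, and if the Lindel\"of subcover happens to be finite you can pad by repetition.
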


\begin{theorem} Let $X$ be a totally disconnected space. Then the following assertions are equivalent:

$(a)$ $C_p(X,\mathbb{N})$ is Baire;

$(b)$ $C_p(X,M)$ is Baire for some complete metric non-compact
zero-dimensional space $M$;

$(c)$ $C_p(X,K)$ is Baire for every complete metric space $K$.

\end{theorem}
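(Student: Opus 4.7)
The plan is to prove $(c) \Rightarrow (b) \Rightarrow (a) \Rightarrow (c)$, mirroring the strategy used for the compact/metric analogue in the preceding theorem, with the Baire space $\omega^\omega$ playing the role that the Cantor set $\mathbb{D}^\omega$ played there.

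The implication $(c) \Rightarrow (b)$ is immediate by taking $K = \mathbb{N}$, which is itself a complete, non-compact, zero-dimensional metric space. For $(a) \Rightarrow (c)$ I would first apply Lemma \ref{lem4} to the product identity $C_p(X, \mathbb{N}^\omega) = (C_p(X, \mathbb{N}))^\omega$ to conclude that $C_p(X, \omega^\omega)$ is Baire. Given an arbitrary (separable) complete metric space $K$, Lemma \ref{lem10} supplies a surjective continuous open mapping $\varphi: \omega^\omega \to K$, which is in particular almost open. Since $X$ is totally disconnected, $C_p(X, \omega^\omega)$ is dense in $(\omega^\omega)^X$ (by the observation at the start of Section 3), so Lemma \ref{lem6} yields that $C_p(X, K)$ is Baire. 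If ``complete metric'' is meant in the non-separable sense, an additional reduction by the weight of $K$ would be required; this is the one place where I expect technical trouble.

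For $(b) \Rightarrow (a)$, the goal is to construct a continuous open surjection $\psi: M \to \mathbb{N}$ and then invoke Lemma \ref{lem6} once more (using that $C_p(X, M)$ is dense in $M^X$ because $X$ is totally disconnected). Since $M$ is complete and non-compact, Hausdorff's theorem (complete $+$ totally bounded $=$ compact) implies $M$ is not totally bounded, so there exist $\varepsilon > 0$ and points $\{x_n\}_{n \in \mathbb{N}}$ with pairwise distances at least $\varepsilon$. Using zero-dimensionality, I would choose clopen sets $C_n$ with $x_n \in C_n \subseteq B(x_n, \varepsilon/3)$; the triangle inequality makes them pairwise disjoint, and a convergent sequence in $\bigcup_n C_n$ must eventually be trapped inside a single $C_n$ by the $\varepsilon/3$-separation of the balls, so $\bigcup_n C_n$ is also closed. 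Hence $M' := M \setminus \bigcup_n C_n$ is clopen; absorb it into $C_1$ if non-empty. The map $\psi$ sending $C_n$ to $n$ is a continuous open surjection onto $\mathbb{N}$, completing this implication via Lemma \ref{lem6}.

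The main obstacle is the construction of the clopen partition of $M$ in $(b) \Rightarrow (a)$, but this is standard once completeness and zero-dimensionality are combined through Hausdorff's characterization of compactness. The only genuine subtlety is the previously noted tension between Lemma \ref{lem10}, which is stated for Polish spaces, and the literal phrasing of~(c); I would read ``complete metric space'' here as \emph{Polish}, consistent with the role of $\omega^\omega$ in~(a), and verify separately that this is the intended scope.
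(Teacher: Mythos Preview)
Your arguments for $(c)\Rightarrow(b)$ and $(b)\Rightarrow(a)$ match the paper's; your treatment of $(b)\Rightarrow(a)$ is in fact more careful than the paper's one-line assertion that a non-compact zero-dimensional metric space decomposes as $\bigoplus_{i\in\omega}M_i$. For $(a)\Rightarrow(c)$ restricted to Polish $K$ you reproduce exactly the paper's opening sentence (Lemmas~\ref{lem4} and~\ref{lem10} followed by Lemma~\ref{lem6}).

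The genuine gap is that the paper \emph{does} mean arbitrary complete metric $K$ in~$(c)$, not just Polish, and the reduction from the general case to the separable one is the bulk of its proof---so your suggestion to reinterpret the statement is not an option. Your instinct that ``an additional reduction by the weight of $K$ would be required'' is exactly right; here is how the paper carries it out. Assuming for a contradiction that some basic open set $P=\bigcap_{i\le k_0}M(x_i,V_i)\subseteq C_p(X,K)$ is meager, $P=\bigcup_m F_m$, one inductively constructs finite sets $\Delta_n\subseteq X$ and increasing finite families $\mu_n$ of open subsets of $K$ (with $\operatorname{diam} V<1/n$ for $V\in\mu_n$, and with closure-refinements $\overline{V'}\subseteq V$ available in $\mu_{n+1}$ for each $V\in\mu_n$) that record, over all relevant basic neighborhoods supported on $\bigcup_{i\le n}\Delta_i$, witnesses to the nowhere-density of $F_n$. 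The points of $K$ admitting a neighborhood base from $\bigcup_n\mu_n$ then have Polish closure $K^*$, and $\{V\cap K^*:V\in\bigcup_n\mu_n\}$ is a $\pi$-base of $K^*$. The Polish case now gives $C_p(X,K^*)$ Baire, and the $\pi$-base property lets one transplant the recorded witnesses into dense open subsets $M_{p+1}$ of the corresponding basic set $P^*\subseteq C_p(X,K^*)$; a common point of the $M_{p+1}$ is then a function in $P\setminus\bigcup_m F_m$, the desired contradiction. This separable-reduction machinery is the missing idea in your proposal.
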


\begin{proof} $(b)\Rightarrow(a)$. Since $M$ is a non-compact
zero-dimensional metric space, $M=\bigoplus \{M_i:i\in\omega\}$
where $M_i$ is a non-empty clopen set for every $i\in\omega$.
Consider $\psi: M\rightarrow \mathbb{N}$ such that $\psi(M_i)=i$
for each $i\in\omega$. Then $\psi$ is a continuous open mapping.
By Lemma \ref{lem6}, $C_p(X,\mathbb{N})$ is Baire.

$(a)\Rightarrow(c)$. Note that, by Lemmas \ref{lem4} and
\ref{lem10}, the implication is true for a Polish space $K$.

Let $K$ be a complete metric space with a complete metric $\rho$.

Assume the contrary. Then, there is a basis open set $P=\bigcap
\{M(x_i,V_i): i\leq k_0\}$ such that $P$ is meager, i.e.,
$P=\bigcup\limits_{m\in\omega} F_m$ where $F_m$ is nowhere dense
and $F_m\subseteq F_{m+1}$ for each $m\in\omega$. Let
$\Delta_0=\{x_i:i\leq k_0\}$, $\mu_0=\{V_i: i\leq k_0\}$.
Construct, by induction, finite disjoint sets $\Delta_n=\{x_{n_i},
i\leq k_n\}\subseteq X$, $\Delta_n\cap \Delta_0=\emptyset$,
$n\in\omega$, finite families open (in $K$) sets $\mu_n$,
$\mu_n\subseteq \mu_{n+1}$, $n\in\omega$, and open (in $K$) sets
$V_{n_i}\in \mu_n$, $i\leq k_n$, $n\geq 2$ such that

(1) for every basis set $\bigcap \{M(x,V(x)):x\in
\bigcup\limits_{i=0}^n \Delta_i\}$, $n\geq 1$, where $V(x)\in
\mu_n$, $V(x_i)\subseteq V_i$, $i\leq n_0$ there are $V'(x)\in
\mu_{n+1}$, $V'(x)\subseteq V(x)$, $x\in \bigcup\limits_{i=0}^n
\Delta_i$ such that $\bigcap \{M(x,V'(x)): x\in
\bigcup\limits_{i=0}^n \Delta_i\}\cap \bigcap
\{M(x_{{n+1},i},V_{n+1,i}): i\leq k_{n+1}\}\cap F_n=\emptyset$,

(2) for every $V\in\mu_n$ there is $V'\in \mu_{n+1}$ such that
$\overline{V'}\subseteq V$, $n\geq 0$;

(3) for every $V\in \mu_n$, $diam V<\frac{1}{n}$, $n\in\omega$.

Consider $\widetilde{K}=\{x\in K:$ there is a base $\{V_n(x):
V_n(x)\in \bigcup\limits_{k=0}^{\infty} \mu_k, n\in\omega\}$ of
nieghborhoods of $x\}$.

By (1) and (2), $V\cap K^*\neq\emptyset$ for every
$V\in\bigcup\limits_{k=0}^{\infty} \mu_k$ and $\{V\cap
\widetilde{K}: V\in\bigcup\limits_{k=0}^{\infty} \mu_k\}$ is a
base in $\widetilde{K}$. Then $K^*=\overline{\widetilde{K}}$ is a
Polish space and $\{V\cap K^*: V\in\bigcup\limits_{k=0}^{\infty}
\mu_k\}$ is a $\pi$-base of $K^*$.

Let $M_{p+1}=\bigcup \{\bigcap\{M(x,V(x)\cap K^*): x\in
\bigcup\limits_{i=0}^{n+1} \mu_i$ where $V(x_i)\subseteq V_i$,
$i\leq k_0$, $n\geq p+1$, $V(x_{n+1,i})\subseteq V_{n+1,i}$,
$i\leq k_{n+1}$, $V(x)\in \bigcup\limits_{i=0}^{n+1} \mu_i$ and
$\bigcap \{M(x,V(x)): x\in \bigcup\limits_{i=0}^{n+1} \mu_i\}\cap
F_n=\emptyset\}$, $p\in \omega$. Then $M_{n+1}$ is a non-empty
open subset of $C_p(X,K^*)$, $n\geq 0$.

We claim that $M_{p+1}$ is dense in $P^*=\bigcap \{M(x_i, V_i\cap
K^*): i\leq k_0\}\subseteq C_p(X,K^*)$. Let $\varphi\in P^*$ and
let $O(\varphi)$ be a base neighborhood of $\varphi$ in
$C(X,K^*)$. We can assume that $O(\varphi)=\bigcap \{M(x,W(x)):
x\in \bigcup\limits_{i=0}^{m} \mu_i\}\cap\bigcap\{M(y,W(y)):y\in
T\subseteq X\setminus \bigcup\limits_{i=0}^{\infty} \Delta_i\}$
where $W(x_i)\subseteq V_i\cap K^*$, $i\leq k_0$ and $W(x)$ (
$x\in \bigcup\limits_{i=0}^{m} \Delta_i$), $W(y)$ ($y\in T$) are
non-empty open sets in $K^*$ , $T$ is finite and $m\in\omega$.

Since $\{V\cap K^*: V\in\bigcup\limits_{n=0}^{\infty} \mu_n\}$ is
a $\pi$-base of $K^*$, there are $V(x)\in
\bigcup\limits_{n=0}^{\infty} \mu_n$ for $x\in
\bigcup\limits_{i=0}^{m} \Delta_i$ such that $V(x)\cap
K^*\subseteq W(x)$ for $x\in \bigcup\limits_{i=0}^{m} \Delta_i$.
Then, there is $k\in\omega$ such that $V(x)\in \mu_l$ for
$x\in\bigcup\limits_{i=0}^{m} \Delta_i$, $k\geq m$, $k\geq p+1$.
By (1), there are sets $V'(x)\in \mu_{l+1}$,
$x\in\bigcup\limits_{i=0}^{l+1} \Delta_i$ such that
$V'(x)\subseteq V(x)$, $x\in \bigcup\limits_{i=0}^{m} \Delta_i$,
$V'(x_{l+1,i})\subseteq V_{l+1,i}$, $i\leq k_{l+1}$ and $\bigcap
\{M(x,V'(x)): x\in\bigcup\limits_{i=0}^{k+1} \Delta_i\}\cap
F_k=\emptyset$.

Choose $g\in \bigcap \{M(x,V'(x)\cap K^*):
x\in\bigcup\limits_{i=0}^{k+1} \Delta_i\}\cap \bigcap \{M(y,W(y)):
y\in T\}$. Then $g\in O(\varphi)\cap P^*\cap M_{p+1}$. Hence,
$M_{p+1}$ is a dense open set in $P^*$. By Theorem \ref{th11},
$C_p(X,K^*)$ is Baire, hence, $\bigcap\limits_{p=0}^{\infty}
M_{p+1}\neq \emptyset$. Let $g\in \bigcap\limits_{p=0}^{\infty}
M_{p+1}$. So we proved that $g\not\in
\bigcup\limits_{m=1}^{\infty} F_m=P$ which is a contradiction.

$(c)\Rightarrow(b)$. It is trivial.
\end{proof}

Recall that a topological vector space $X$ is a {\it Fr\'{e}chet
space} if $X$ is a locally convex complete metrizable space.

\begin{theorem} Let $X$ be a Tychonoff space. Then the following assertions are equivalent:

$(a)$ $C_p(X)$ is Baire;

$(b)$ $C_p(X,M)$ is Baire for some Fr\'{e}chet space $M$, $dim
M>0$;

$(c)$ $C_p(X,L)$ is Baire for any Fr\'{e}chet space $L$.

\end{theorem}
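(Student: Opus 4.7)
The plan is to prove the cycle $(c) \Rightarrow (b) \Rightarrow (a) \Rightarrow (c)$, with the first two implications short and the last modelled directly on the proof of the preceding theorem. For $(c) \Rightarrow (b)$, take $L = \mathbb{R}$, a Fr\'{e}chet space of dimension one. For $(b) \Rightarrow (a)$, given a Fr\'{e}chet space $M$ with $\dim M > 0$ and $C_p(X,M)$ Baire, I would pick a nonzero $m \in M$ and use the Hahn--Banach theorem to produce a continuous linear functional $\varphi \colon M \to \mathbb{R}$ with $\varphi(m) = 1$. Such a $\varphi$ is a continuous linear surjection onto the one-dimensional Hausdorff topological vector space $\mathbb{R}$, so it is automatically open. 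Density of $C_p(X,M)$ in $M^X$ comes from the Tychonoff property of $X$ combined with the linear structure of $M$: given finitely many constraints $f(x_i) \in U_i$, pick $m_i \in U_i$ and Urysohn functions $\alpha_i$ on $X$ with $\alpha_i(x_j) = \delta_{ij}$, and form $\sum_i \alpha_i(\cdot)\, m_i$. Lemma~\ref{lem6} applied to $\varphi$ now yields that $C_p(X) = C_p(X, \mathbb{R})$ is Baire.

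For $(a) \Rightarrow (c)$, I would imitate the proof of the previous theorem (complete metric codomains over totally disconnected $X$) essentially verbatim. Assume for contradiction $C_p(X, L)$ is meager and fix a basic open $P = \bigcap_{i \le k_0} M(x_i, V_i)$ with $P = \bigcup_m F_m$, each $F_m \subseteq F_{m+1}$ nowhere dense and closed. Using a complete translation-invariant metric $\rho$ on $L$ together with the local convexity of $L$, construct inductively pairwise disjoint finite sets $\Delta_n \subseteq X$ and increasing finite families $\mu_n$ of open \emph{convex} subsets of $L$ satisfying the analogues of conditions $(1)$--$(3)$ of the previous proof, with the diameter bound in $(3)$ measured in $\rho$. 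Let $\widetilde{K} \subseteq L$ be the set of points of $L$ having a neighborhood base inside $\bigcup_k \mu_k$ and set $K^* = \overline{\widetilde{K}}$; convexity of the members of the $\mu_n$ and $\rho$-completeness make $K^*$ a Polish closed convex subset of $L$. The definition of the open sets $M_{p+1}$ and the proof that each is open dense in $P^* = \bigcap_i M(x_i, V_i \cap K^*) \subseteq C_p(X, K^*)$ then carry over word-for-word, and any element of $\bigcap_p M_{p+1}$ will contradict $P = \bigcup_m F_m$.

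The main obstacle is the final invocation of Baireness of $C_p(X, K^*)$. In the previous theorem this was supplied by Theorem~\ref{th11} via the totally-disconnectedness hypothesis, which is no longer available. What I would need instead is that $C_p(X)^{\omega} = C_p(X, \mathbb{R}^{\omega})$ is Baire whenever $C_p(X)$ is, after which I would identify the Polish closed convex $K^*$ with a closed subspace of $\mathbb{R}^{\omega}$ (via the Anderson--Kadec theorem, or treat it as an absolute retract of $\mathbb{R}^{\omega}$) and transfer Baireness through Lemmas~\ref{lem5} and~\ref{lem6}. Lemma~\ref{lem4} gives only the converse direction ($C_p(X)^{\omega}$ Baire implies $C_p(X)$ Baire); the forward passage required here is the genuinely new ingredient, and the natural tool for it is the Pytkeev--Tkachuk--van~Douwen combinatorial characterisation of Baireness of $C_p(X, \mathbb{R})$, which is a condition on $X$ alone and therefore insensitive to the dimension of the codomain. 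Once this step is secured, the rest of the argument is a mechanical transcription of the previous proof.
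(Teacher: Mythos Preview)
Your overall strategy matches the paper's: cycle $(c)\Rightarrow(b)\Rightarrow(a)\Rightarrow(c)$, with $(b)\Rightarrow(a)$ via a nonzero continuous linear functional and Lemma~\ref{lem6}, and $(a)\Rightarrow(c)$ by transplanting the meager-basic-open construction from the preceding theorem. You also correctly isolate the genuinely new ingredient, passing from ``$C_p(X)$ Baire'' to ``$C_p(X,\mathbb{R}^{\omega})$ Baire'', and propose the right tool for it (the Pytkeev--Tkachuk--van Douwen characterisation); the paper uses this same step but simply writes ``Hence'' after invoking Toru\'nczyk.

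The one place where your plan diverges from the paper, and where it is genuinely underspecified, is the choice of the auxiliary subspace of $L$. You build $K^*$ as a Polish closed \emph{convex} subset and then propose to pull Baireness across from $C_p(X,\mathbb{R}^{\omega})$ via Anderson--Kadec or an absolute-retract argument. But Anderson--Kadec classifies separable Fr\'echet \emph{spaces}, not arbitrary closed convex subsets, and a retraction $\mathbb{R}^{\omega}\to K^*$ is not in general almost open, so neither Lemma~\ref{lem5} nor Lemma~\ref{lem6} applies; nor does ``$C_p(X,K^*)$ is a retract of the Baire space $C_p(X,\mathbb{R}^{\omega})$'' suffice, since retracts of Baire spaces need not be Baire. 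The paper avoids this entirely by carrying along, in the same induction, an increasing chain of separable \emph{linear} subspaces $L_n\subseteq L$ (together with countable families $\mathcal{B}_n$ tracing bases of the $L_n$ that are fed back into the $\mu_n$), and then setting $L^*=\overline{\bigcup_n L_n}$. This $L^*$ is itself a separable Fr\'echet space, so Toru\'nczyk (Anderson--Kadec) gives $L^*\cong\mathbb{R}^{\alpha}$ with $\alpha\le\aleph_0$, whence $C_p(X,L^*)\cong C_p(X)^{\alpha}$ outright and no transfer lemma is needed. Replacing ``convex'' by ``closed linear span'' in your inductive construction makes your argument coincide with the paper's.
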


\begin{proof} $(b)\Rightarrow(a)$.
 Let $\varphi$ be a continuous liner functional on $M$,
 $\varphi\neq 0$. By \cite{sh} (see Chapter III, Corollary 1), $\varphi:M\rightarrow \mathbb{R}$ is
 a surjective continuous open mapping. By Lemma \ref{lem6}, $C_p(X)$
 is Baire.

$(a)\Rightarrow(c)$. Assume the contrary. Then, there is a basis
open set $P=\bigcap \{M(x_i,V_i): i\leq
k_0\}=\bigcup\limits_{m=1}^{\infty} F_m$ where $F_m$ is  a nowhere
dense set and $F_m\subseteq F_{m+1}$ for all $m\in\omega$. Let
$\Delta_0=\{x_i:i\leq k_0\}$, $\mu_0=\{V_i:i\leq k_0\}$.
Construct, by induction, finite sets $\Delta_n=\{x_{n_i}: i\leq
k_n\}\subseteq X$, $\Delta_n\cap \Delta_0=\emptyset$,
$n\in\omega$, finite families $\mu_n$, $\mu_n\subseteq \mu_{n+1}$,
$n\in\omega$, of open sets in $L$, open sets $V_{n_i}\in \mu_n$ in
$L$, $i\leq k_n$, $n\geq 2$, separable linear subspaces $L_n$,
$L_n\subseteq L_{n+1}$, $n\in\omega$, families
$\mathcal{B}_n=\{O_{n,m}: m\in\omega\}$ of open sets in $L$ for
$n\in\omega$, such that

(1) for every basic set $\bigcap \{M(x,V(x)): x\in
\bigcup\limits_{i=0}^n \Delta_i\}$, $n>1$, where $V(x)\in\mu_n$,
$V(x_i)\subseteq V_i$, $i\leq n_0$, there are $V'(x)\in\mu_{n+1}$,
$V'(x)\subseteq V(x)$, $x\in \bigcup\limits_{i=0}^n \Delta_i$ such
that $\bigcap \{M(x,V'(x)): x\in \bigcup\limits_{i=0}^n
\Delta_i\}\cap \bigcap\{M(x_{n+1,i},V_{n+1,i}): i\leq
k_{n+1}\}\cap F_n=\emptyset$,

(2) for every $V\in \mu_n$ there is $V'\in \mu_{n+1}$ such that
$\overline{V'}\subseteq V$, $n\geq 0$,

(3) $\mathcal{B}_n=\{O_{n,m}:m\in\omega\}$ such that
$\{O_{n,m}\cap L_n: m\in\omega\}$ is a base of $L_n$ and $diam
O_{n,m}\rightarrow 0$ ($m\rightarrow \infty$), $n\in \omega$,

(4) $\mu_n\subseteq \mu_{n+1}$ and $\{O_{k,m}: k,m\leq n+1\}$,
$n\in \omega$.

Let $L^*=\overline{\bigcup\limits_{n=1}^{\infty} L_n}$. Then $L^*$
is a separable Fr\'{e}chet space. By (3) and (4), $V\cap L^*\neq
\emptyset$ for all $V\in \bigcup\limits_{n=0}^{\infty} \mu_n$ and
$\{V\cap L^*: V\in \bigcup\limits_{n=0}^{\infty} \mu_n\}$ is a
$\pi$-base in $L^*$. By \cite{Tur}, $L^*$ is homeomorphic to
$\mathbb{R}^{\alpha}$, $\alpha\leq \aleph_0$. Hence, $C_p(X,L^*)$
is Baire. Let $M_{p+1}=\bigcup \{\bigcap\{M(x,V(x)\cap K^*): x\in
\bigcup\limits_{i=0}^{n+1} \mu_i$ where $V(x_i)\subseteq V_i$,
$i\leq k_0$, $n\geq p+1$, $V(x_{n+1,i})\subseteq V_{n+1,i}$,
$i\leq k_{n+1}$, $V(x)\in \bigcup\limits_{i=0}^{n+1} \mu_i$ and
$\bigcap \{M(x,V(x)): x\in \bigcup\limits_{i=0}^{n+1} \mu_i\}\cap
F_n=\emptyset\}$, $p\in \omega$. Then $M_{n+1}$ is a non-empty
open subset of $C_p(X,K^*)$ for $n\geq 0$.

We claim that $M_{p+1}$ is dense in $P^*=\bigcap \{M(x_i, V_i\cap
K^*): i\leq k_0\}\subseteq C_p(X,K^*)$. Let $\varphi\in P^*$ and
let $O(\varphi)$ be a base neighborhood of $\varphi$ in
$C(X,K^*)$. We can assume that $O(\varphi)=\bigcap \{M(x,W(x)):
x\in \bigcup\limits_{i=0}^{m} \mu_i\}\cap\bigcap\{M(y,W(y)):y\in
T\subseteq X\setminus \bigcup\limits_{i=0}^{\infty} \Delta_i\}$
where $W(x_i)\subseteq V_i\cap K^*$, $i\leq k_0$ and $W(x)$ (
$x\in \bigcup\limits_{i=0}^{m} \Delta_i$), $W(y)$ ($y\in T$) are
non-empty open sets in $K^*$ , $T$ is finite and $m\in\omega$.

Since $\{V\cap K^*: V\in\bigcup\limits_{n=0}^{\infty} \mu_n\}$ is
a $\pi$-base of $K^*$ there are $V(x)\in
\bigcup\limits_{n=0}^{\infty} \mu_n$ for $x\in
\bigcup\limits_{i=0}^{m} \Delta_i$ such that $V(x)\cap
K^*\subseteq W(x)$ for $x\in \bigcup\limits_{i=0}^{m} \Delta_i$.
Then, there is $k\in\omega$ such that $V(x)\in \mu_l$ for
$x\in\bigcup\limits_{i=0}^{m} \Delta_i$, $k\geq m$, $k\geq p+1$.
By (3), there are sets $V'(x)\in \mu_{l+1}$,
$x\in\bigcup\limits_{i=0}^{l+1} \Delta_i$ such that
$V'(x)\subseteq V(x)$, $x\in \bigcup\limits_{i=0}^{m} \Delta_i$,
$V'(x_{l+1,i})\subseteq V_{l+1,i}$, $i\leq k_{l+1}$ and $\bigcap
\{M(x,V'(x)): x\in\bigcup\limits_{i=0}^{k+1} \Delta_i\}\cap
F_k=\emptyset$.

Choose $g\in \bigcap \{M(x,V'(x)\cap K^*):
x\in\bigcup\limits_{i=0}^{k+1} \Delta_i\}\cap \bigcap \{M(y,W(y)):
y\in T\}$. Then $g\in O(\varphi)\cap P^*\cap M_{p+1}$. Hence,
$M_{p+1}$ is a dense open set in $P^*$. By Theorem \ref{th11},
$C_p(X,K^*)$ is Baire, hence, $\bigcap\limits_{p=0}^{\infty}
M_{p+1}\neq \emptyset$. Let $g\in \bigcap\limits_{p=0}^{\infty}
M_{p+1}$. So we proved that $g\not\in
\bigcup\limits_{m=1}^{\infty} F_m=P$ which is a contradiction.

$(c)\Rightarrow(b)$. It is trivial.

\end{proof}

We will next give an example of a totally disconnected space $T$
such that, for any second countable space $M$, the space
$C_p(T,M)$ has the Baire property if and only if $M$ is a Peano
continuum. In particular, the space $C_p(T,\mathbb{I})$ is Baire
but $C_p(T,\mathbb{D})$ is not.

We use the following lemma in the construction of a space $T$.

\begin{lemma}\label{lem21} Let $X$ be a pseudocompact infinite space
and let $C_p(X,Y)$ be a dense set in $Y^X$. Then, if $C_p(X,Y)$ is
Baire then  $Y$ is pseudocompact.
\end{lemma}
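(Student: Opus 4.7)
My plan is to prove the contrapositive: if $Y$ is not pseudocompact, then under the given hypotheses $C_p(X,Y)$ is meager in itself, contradicting the Baire property. Since $Y$ is Tychonoff, failure of pseudocompactness yields a continuous function $\phi : Y \to \mathbb{R}$ that is unbounded on $Y$. This is the only structural fact about $Y$ I will need.

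For each $n \in \mathbb{N}$, I would set
\[
F_n = \{\, f \in C_p(X,Y) : |\phi(f(x))| \le n \text{ for every } x \in X \,\}.
\]
The first step is to check that these sets cover $C_p(X,Y)$: for any continuous $f : X \to Y$ the composition $\phi \circ f$ is continuous on the pseudocompact space $X$, hence bounded, so $f \in F_n$ for some $n$. Next, each $F_n$ is closed in $C_p(X,Y)$, since
\[
F_n = \bigcap_{x \in X} \{\, f \in C_p(X,Y) : f(x) \in \phi^{-1}([-n,n]) \,\}
\]
is an intersection of subbasic closed sets (each coordinate evaluation is continuous and $\phi^{-1}([-n,n])$ is closed in $Y$).

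The heart of the argument is showing that each $F_n$ is nowhere dense in $C_p(X,Y)$. Take any non-empty basic open set $U = \{\, f : f(x_i) \in W_i,\ i = 1,\ldots,m\,\} \cap C_p(X,Y)$ with $W_i$ non-empty open in $Y$. Since $X$ is infinite, pick $x^\ast \in X \setminus \{x_1,\ldots,x_m\}$, and since $\phi$ is unbounded the set $W^\ast = \phi^{-1}(\mathbb{R} \setminus [-n,n])$ is non-empty and open in $Y$. The basic open neighborhood of $Y^X$ given by $\{h \in Y^X : h(x_i) \in W_i, h(x^\ast) \in W^\ast\}$ is non-empty, so by the hypothesis that $C_p(X,Y)$ is dense in $Y^X$ it contains some $g \in C_p(X,Y)$. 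Then $g \in U$ but $g \notin F_n$, witnessing that $F_n$ has empty interior. Consequently $C_p(X,Y) = \bigcup_n F_n$ is meager in itself; since it is non-empty (containing the constant functions), this contradicts the Baire hypothesis.

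The proof is short and I do not anticipate a real obstacle. The one place requiring care is the nowhere-density step, where the density of $C_p(X,Y)$ in $Y^X$ must be invoked precisely: the extra open condition at $x^\ast$ turns a partial specification into a basic open set of $Y^X$, and density is exactly what is needed to realize such a finite specification by a continuous function. Pseudocompactness of $X$ is used only to ensure that the sets $F_n$ exhaust $C_p(X,Y)$, and the assumption that $X$ is infinite is used only to furnish a fresh coordinate $x^\ast$.
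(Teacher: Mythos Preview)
Your proof is correct and follows exactly the same approach as the paper's: defining $F_m=\{f\in C(X,Y):\|\varphi\circ f\|\le m\}$ for an unbounded continuous $\varphi:Y\to\mathbb{R}$, covering $C_p(X,Y)$ by these closed sets via pseudocompactness of $X$, and using density of $C_p(X,Y)$ in $Y^X$ together with the infiniteness of $X$ to show each $F_m$ is nowhere dense. The paper states the nowhere-density step in a single sentence, while you spell out the extra-coordinate argument explicitly, but the content is identical.
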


\begin{proof} Assume the contrary. There is a unbounded continuous mapping $\varphi:
Y\rightarrow \mathbb{R}$. Then,
$C_p(X,Y)=\bigcup\limits_{m=1}^{\infty} F_m$ where $F_m=\{f\in
C(X,Y): ||\varphi\circ f||\leq m\}$, $m\in\omega$. Since
$C_p(X,Y)$ is dense in $Y^X$ and $X$ is infinite, $F_m$ is nowhere
dense which is a contradiction.
\end{proof}

\begin{proposition} There exists a totally disconnected Tychonoff
space $T$ such that $C_p(T,M)$ is Baire where $M$ is a separable
metric space if, and only if, $M$ is a Peano continuum.
\end{proposition}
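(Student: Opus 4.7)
The plan is to build $T$ as a totally disconnected, pseudocompact subspace of $\mathbb I^{\mathfrak c}$ for which $C_p(T,\mathbb R)$ (and hence $C_p(T,\mathbb I)$) is Baire while $T$ fails the $\mathbb B_{\mathbb D}$-property, and then to deduce the equivalence from Lemmas \ref{lem6} and \ref{lem21}, Theorem \ref{th11}, and the Fr\'{e}chet-space theorem of this paper.

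\textbf{Construction of $T$.} Following the Shapirovskii-style construction used for $X_0$ in the proof of the $\pi$-monolithic theorem, but with $\mathbb I$ in place of $\mathbb D$, I would produce a dense subspace $T\subseteq\mathbb I^{\mathfrak c}$ that is totally disconnected (by constraining each coordinate of points of $T$ to lie in an uncountable totally disconnected dense subset of $\mathbb I$), pseudocompact with $\beta T=\mathbb I^{\mathfrak c}$, and such that every countable subset of $T$ is closed in $T$ and $C^*$-embedded. This last property is the $\mathbb R$-analogue of condition (2) in the construction of $X_0$; it lets us run the argument of Theorem \ref{th11} with $\mathbb R$ in place of $\mathbb D$ to conclude that $C_p(T,\mathbb R)$ is Baire, and then Lemma \ref{lem6} applied to the continuous open clamp map $\mathbb R\to\mathbb I$ gives $C_p(T,\mathbb I)$ Baire. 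Simultaneously, the transfinite bookkeeping must preclude the $\mathbb B_{\mathbb D}$-property of Definition 3.1: at each stage one kills every candidate clopen separation of a pre-assigned pairwise disjoint family $\{\Delta_n=A_n\cup B_n\}$ of finite subsets of $T$, so that by Theorem \ref{th11}, $C_p(T,\mathbb D)$ is not Baire.

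\textbf{The two implications.} For the ``if'' direction, let $M$ be a Peano continuum. The Fr\'{e}chet-space theorem of this paper applied to $\mathbb R^\omega$ gives that $C_p(T,\mathbb R^\omega)$ is Baire, and Lemma \ref{lem6} applied to the coordinate-wise continuous open clamp $\mathbb R^\omega\to\mathbb I^\omega$ makes $C_p(T,\mathbb I^\omega)$ Baire. A classical continuum-theoretic argument --- the Hahn--Mazurkiewicz surjection $\mathbb I\to M$ combined with the ``product with an isolated-point-free metric compactum'' trick used in the $\pi$-monolithic theorem, composed with the projection --- supplies an almost open continuous surjection $\mathbb I^\omega\to M$, and Lemma \ref{lem6} then transfers the Baire property to $C_p(T,M)$. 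For the ``only if'' direction, assume $C_p(T,M)$ is Baire. Density of $C_p(T,M)$ in $M^T$ (from total disconnectedness of $T$) and Lemma \ref{lem21} force $M$ to be pseudocompact, hence a metric compactum. A disconnected $M$ would give a continuous open surjection $M\to\mathbb D$ and so by Lemma \ref{lem6} force $C_p(T,\mathbb D)$ Baire, contradicting the construction; a non-locally-connected continuum $M$ admits, via a standard continuum-theoretic reduction (a monotone upper semi-continuous decomposition coming from a non-degenerate convergent sequence of subcontinua), a continuous surjection onto a disconnected metric compactum, reducing to the previous case.

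\textbf{Main obstacle.} The decisive difficulty is the construction of $T$: the four requirements --- total disconnectedness, pseudocompactness with the correct $\beta T$, $C^*$-embedding of countable subsets, and failure of the $\mathbb B_{\mathbb D}$-property --- pull in competing directions, and the naive Shapirovskii construction for the first three tends to produce the $\mathbb B_{\mathbb D}$-property as a byproduct. A careful Bernstein-style bookkeeping that lists in advance every potential clopen separation of the pre-assigned witnessing family and kills it at the corresponding induction stage is required to arrange all four properties simultaneously. A secondary subtlety is the continuum-theoretic reduction in the non-locally-connected case.
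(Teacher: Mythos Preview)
Your reduction for local connectedness contains a fatal error: a continuum is by definition connected, and continuous images of connected spaces are connected, so there is \emph{no} continuous surjection --- almost open or otherwise --- from a connected $M$ onto a disconnected compactum. No monotone upper semi-continuous decomposition can change this, since monotone quotients of continua are again continua. Thus once your argument has established that $M$ is a compact metric continuum, Lemma~\ref{lem6} gives no further leverage toward a disconnected target, and the proof stalls at ``connected compact metric'' without reaching ``locally connected''. The connectedness step (disconnected $M$ yields an open surjection $M\to\mathbb D$, hence $C_p(T,\mathbb D)$ Baire) is fine, but it does not iterate.

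The paper sidesteps this by an entirely different construction and argument. Rather than a single Shapirovskii space in $\mathbb I^{\mathfrak c}$ with $\mathbb B_{\mathbb D}$ killed by bookkeeping, it builds $T$ as the graph of a bijection inside $\Sigma(\mathbf 0)\times Z\subseteq\mathbb D^A\times\mathbb I^A$ (with $Z$ the Shakhmatov space), arranged so that both coordinate projections are condensations and $T$ is $\aleph_0$-dense. The projection to $Z$ yields Baireness of $C_p(T,\mathbb I)$; the projection to $\Sigma(\mathbf 0)$ yields total disconnectedness. The crucial payoff is structural: every $f\in C(T,M)$ extends and factors through some $\mathbb D^{B'}\times\mathbb I^{B'}$ with $B'$ countable, so the images of the slices $\{t\}\times\mathbb I^{B'}$ are Peano continua sitting inside $M$. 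A Baire-category argument in $C_p(T,M)$ then shows that for any sequence $\{U_n\}$ of nonempty open sets in $M$ one such Peano image meets infinitely many $U_n$; this forces both connectedness and local connectedness of $M$ directly, with no reference to $C_p(T,\mathbb D)$ or the $\mathbb B_{\mathbb D}$-property at all. Your proposed space, and your proposed argument, lack any analogue of this ``internal source of Peano subcontinua in $M$'', which is exactly what is needed to get past connectedness.
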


\begin{proof} By \cite{Sh}, there is $Z\subseteq \mathbb{I}^A$ where
$|A|=\mathfrak{c}$ such that

$(1)$ $|Z|=\mathfrak{c}$;

$(2)$ every countable subset $S\subset Z$ is closed and
$C^*$-embedded;

$(3)$ $Z$ is $\aleph_0$-dense in $\mathbb{I}^A$.

Let $\sum({\bf 0})\subseteq \mathbb{D}^A$ be a $\sum$-product
about ${\bf 0}$ (${\bf 0}$ is a function $f$ from $A$ into
$\mathbb{D}$ such that $f(\alpha)=0$ for each $\alpha\in A$), and
let $\{A_{\alpha}\times B_{\alpha}: \alpha<\mathfrak{c}\}$ be a
base of $G_{\delta}$-topology $\sum({\bf 0})\times Z$. It is clear
that $|A_{\alpha}|=|B_{\alpha}|=\mathfrak{c}$ for
$\alpha<\mathfrak{c}$. Let $\sum({\bf
0})=\{m_{\alpha}:\alpha<\mathfrak{c}\}$ and
$Z=\{z_{\alpha}:\alpha<\mathfrak{c}\}$. By induction, we construct
sets $\{T_{\alpha}: \alpha<\mathfrak{c}\}$ such that

$(4)$ $T_{\beta}\supseteq T_{\alpha}$, $\beta>\alpha$,
$|T_{\alpha+1}\setminus T_{\alpha}|<\aleph_0$, $T_{\beta}=\bigcup
\{T_{\alpha}: \alpha<\beta\}$ for a limit ordinal $\beta$;

$(5)$ $T_{\alpha}\cap (A_{\beta}\times B_{\beta})\neq \emptyset$,
$\alpha<\mathfrak{c}$, $\beta<\alpha$;

$(6)$ $\pi_{\sum({\bf 0})}(T_{\alpha})\supseteq \{m_{\beta}:
\beta<\alpha\}$, $\pi_Z(T_{\alpha})\supseteq \{z_{\alpha}:
\beta<\alpha\}$.

Let $T_0=\{(m_0,z_0)\}$. Assume that $T_{\alpha}$, $\alpha<\beta$
 are constructed. If $\beta$ is limit then $T_{\beta}=\bigcup
\{T_{\alpha}: \alpha<\beta\}$. Let $\beta=\beta^{-}+1$. If
$m_{\beta^{-}}\not\in \pi_{\sum({\bf 0})}T_{\beta^{-}}$ then
choose $z_{\gamma}\in Z\setminus \pi_{Z}T_{\beta^{-}}$. Let
$T'_{\beta}=T_{\beta^{-}}\cup \{(m_{\beta^{-}}, z_{\gamma})\}$. If
$z_{\beta^{-}}\not\in \pi_ZT'_{\beta}$ then choose $m_{\gamma'}\in
\sum({\bf 0})\setminus \pi_{\sum({\bf 0})}T'_{\beta}$. Let
$T''_{\beta}=T'_{\beta}\cup\{(m_{\gamma'},z_{\beta^{-}})\}$.

Let $m\in A_{\beta^{-}}\setminus \pi_{\sum({\bf 0})}T''_{\beta}$,
$z\in B_{\beta^{-}}\setminus \pi_Z T''_{\beta}$. Then
$T_{\beta}=T''_{\beta}\cup\{(m,z)\}$. It is clear that the
conditions (4), (5) and (6) for $\{T_{\alpha}:\alpha\leq \beta\}$
hold.

We claim that $T=\bigcup\{T_{\alpha}: \alpha<\mathfrak{c}\}$ is as
required.

Note that

(7) by (4),(6), $\pi_{\sum({\bf 0})}\upharpoonright T:
T\rightarrow \sum({\bf 0})$ and $\pi_Z\upharpoonright T:
T\rightarrow Z$ are bijections.

By (5),

(8) $T$ is $\aleph_0$-dense in $\sum({\bf 0})\times Z$, hence, in
$\mathbb{D}^A\times \mathbb{I}^A$.

By (2), $C_p(Z,\mathbb{I})$ is Baire. Since $Z$ is a condensation
of $T$, $C_p(T,\mathbb{I})$ is Baire. By Theorem 3.18 in
\cite{ospyt}, $C_p(T,K)$ is Baire if $K$ is a Peano continuum.

Since $\sum({\bf 0})$ is a condensation of $T$, $T$ is a totally
disconnected. By (8), $T$ is pseudocompact. By Lemma \ref{lem21},
$M$ is compact.

(I). We claim that $M$ is connected. Let $U\cap V=\emptyset$ where
$U$, $V$ are open sets of $M$. Choose $B\subseteq A$,
$|B|=\aleph_0$ and a family $\{\Delta_n: n\in\omega\}$ of finite
disjoint sets of $\mathbb{D}^B$ and $\Delta_n$ is a
$\frac{1}{n}$-network in $\mathbb{D}^B$, $n\in\omega$. Let
$\Delta'_n=\Delta_n\times 0\subseteq \sum({\bf 0})$, $n\in
\omega$. Since $\pi_{\sum(\mathbb{D}}\upharpoonright T:
T\rightarrow \sum({\bf 0})$ is a condensation, there are finite
disjoint sets $\Delta''_n$ such that $\pi_{\sum({\bf
0})}\Delta''_n=\Delta'_n$. Let $M_n=\{f\in C(T,M):
f(\Delta''_{2n})\subseteq U$, $f(\Delta''_{2n+1})\subseteq V\}$
and $F_m=\bigcap\limits_{n\geq m}(C(T,M)\setminus M_n)$, $m,n\in
\omega$. Since $\{\Delta''_n:n\in\omega\}$ is disjoint and $T$ is
totally disconnected, $F_m$ is nowhere dense. Since $C_p(T,M)$ is
Baire, there is $\varphi\in C(T,M)\setminus
\bigcup\limits_{m=1}^{\infty} F_m$. It follows that there is a
sequence $\{n_k\}$, $n_{k+1}>n_k$ such that $\varphi\in M_{n_k}$,
$k\in\omega$. Since $T$ is a $\aleph_0$-dense in
$\mathbb{D}^A\times \mathbb{I}^A$, by \cite{Arh} (see Lemma
0.2.3), there are a countable set $B'\supseteq B$ and a continuous
mapping $\varphi': \mathbb{D}^{B'}\times
\mathbb{I}^{B'}\rightarrow M$ such that
$\varphi=\varphi'\pi_{\mathbb{D}^{B'}\times \mathbb{I}^{B'}}$. Let
$\pi_{\mathbb{D}^{B'}\times
\mathbb{I}^{B'}}(\Delta''_n)=\nabla_n$, $n\in\omega$. Then
$\varphi'(\nabla_{2n_k})\subseteq U$,
$\varphi'(\nabla_{2n_k+1})\subseteq V$.

Since $\pi_{\mathbb{D}^{B'}} \nabla_n=\Delta_n\times 0$ where
$0\in \mathbb{D}^{B'\setminus B}$, there are open sets
$V(\nabla_n)=W(\Delta_n)\times O_n(0)$, $n=2n_k, 2n_k+1$,
$k\in\omega$ where $W(\Delta_n)$ is open in $\mathbb{D}^B$,
$O_n(0)$ is open in $\mathbb{D}^{B'\setminus B}$ such that
$\varphi'(V(\nabla_{2n_k}))\subseteq U$,
$\varphi'(V(\nabla_{2n_k+1}))\subseteq V$, $k\in\omega$. Since
$\Delta_n$ is a $\frac{1}{n}$-network in $\mathbb{D}^B$,
$n\in\omega$,
$\bigcap\limits_{k=1}^{\infty}(\bigcup\limits_{m=k}^{\infty}
W(\Delta_{2n_m}))\cap
\bigcap\limits_{k=1}^{\infty}(\bigcup\limits_{m=k}^{\infty}W(\Delta_{2n_m+1}))\neq\emptyset$,
i.e., there are $t$, $2n_p$, $2n_l+1$ such that $t\in
W(\Delta_{2n_p})\cap W(\Delta_{2n_l+1})$. Then
$\widetilde{t}=t\times 0$ where $0\in \times
\mathbb{D}^{B'\setminus B}$ belongs $V(\nabla_{2n_p})\cap
V(\nabla_{2n_l+1})$. Hence, $\varphi'(\widetilde{t}\times
\mathbb{I}^{B'})\cap U\neq \emptyset$ and
$\varphi'(\widetilde{t}\times \mathbb{I}^{B})\cap V\neq
\emptyset$. Thus, for any non-empty open sets $U$, $V$ in $M$
there is the connected set
$S=\varphi'(\widetilde{t}\times\mathbb{I}^{B'})$ such that $S\cap
U \neq\emptyset$ and $S\cap V\neq\emptyset$. It follows that $M$
is connected.

(II). We claim that for any sequence $\{U_n: n\in\omega\}$ of
non-empty open sets in $M$ there is a Peano continuum $P$ such
that $|\{n: U_n\cap P\neq\emptyset\}|=\aleph_0$.

Let $\Delta_n\subseteq \mathbb{D}^B$, $\Delta'_n$, $\Delta''_n$ be
sequences from (I).

Let $\widetilde{M}_n=\{f\in C(T,M): f(\Delta''_n)\subseteq U_n\}$,
$F_m=\bigcap\limits_{n\geq n}(C(T,M)\setminus \widetilde{M}_n)$,
$n,m\in\omega$, $\varphi\in C(T,M)\setminus
\bigcup\limits_{m=1}^{\infty} F_m$.

Then there is a sequence $\{n_k\}$, $n_{k+1}>n_k$, $k\in\omega$,
such that $\varphi\in \widetilde{M}_{n_k}$, $k\in\omega$.

Further, as in (I), there are a countable set
$\widetilde{B}\supseteq B$,
$\widetilde{\varphi}:\mathbb{D}^{\widetilde{B}}\times
\mathbb{I}^{\widetilde{B}}\rightarrow M$, open sets
$V(\nabla_{n_k})=W(\Delta_{n_k})\times O_{n_k}(0)$, $k\in\omega$
such that $\widetilde{\varphi}(V(\nabla_{n_k}))\subseteq U_{n_k}$,
$k\in\omega$. Let $t\in \bigcap\limits_{k=1}^{\infty}
\bigcup\limits_{m=k}^{\infty} W(\Delta_{n_m})$. Then
$\widetilde{t}=t\times 0$ where $0\in
\mathbb{D}^{\widetilde{B}\setminus B}$, belongs
$V(\nabla_{n_{m_l}})$ for a subsequence $n_{m_l}$, $l\in\omega$.
Then, for the Peano continuum
$P=\widetilde{\varphi}(\widetilde{t}\times
\mathbb{I}^{\widetilde{B}})$ we have $P\cap
U_{n_{m_l}}\neq\emptyset$ for $l\in\omega$.

By (II), we claim that $M$ is a locally connected space. Assume
the contrary. Then there is a non-empty open set $V\subseteq M$
and some non-open component $K\subseteq V$. Let $t\in K\setminus
Int K$. Since $K$ is closed in $V$, there is a sequence of
non-empty open sets $U_n\subseteq V\setminus K$ and
$U_n\rightarrow t$. By (II), there exists a Peano continuum $P$
such that $P\cap U_n\neq \emptyset$ for $n\in N_1\subseteq \omega$
($N_1$ is infinite). Then $t\in P$ and there is a connected
neighborhood $O(t)\subseteq V$ of $t$ in $P$. It follows that
$O(t)\cap U_{n_0}\neq \emptyset$ for some $U_{n_0}$. Hence $K$ is
not a component of $V$ which is a contradiction.

Since $M$ is locally connected, connected and compact, it is a
Peano continuum.
\end{proof}

%--------------------------------------------------------------------------------------------------------------------

\medskip

{\bf Acknowledgements.} The research funding from the Ministry of
Science and Higher Education of the Russian Federation (Ural
Federal University Program of Development within the Priority-2030
Program) is gratefully acknowledged.

 The authors are grateful to the referee for useful remarks and suggestions.

\bibliographystyle{model1a-num-names}
\bibliography{<your-bib-database>}

\end{document}